\documentclass[12pt]{amsart}%

\usepackage{amssymb}
\usepackage{amsmath}
\usepackage{amsthm}
\usepackage{amscd,enumitem,graphicx,tikz, pst-node, pst-plot, pstricks}
\usepackage[margin=1.25in]{geometry}%
\usepackage{hyperref}
\usepackage{color}
\usetikzlibrary{patterns,shapes,decorations.pathreplacing}

\pagestyle{plain}

\theoremstyle{plain}
\newtheorem{theorem}{Theorem}[section]

\newtheorem{corollary}{Corollary}[section]
\newtheorem{lemma}{Lemma}[section]

\theoremstyle{remark}

\newtheorem{remark}{Remark}[section]
\newtheorem{examples}{Examples}[section]
\newtheorem{assumption}{Assumption}[section]

\DeclareMathOperator*{\esssup}{ess\,sup}

\DeclareMathOperator{\diam}{diam}

\DeclareMathOperator{\cu}{curl}
\DeclareMathOperator{\cuc}{curl^*}
\DeclareMathOperator{\dom}{Dom}

\def\Ex{\mathcal E_S}
\def\dd{\mathcal D_S}
\def\Dx{\Delta_S}

\def\domc{\dom(\cu)}
\def\domcc{\dom(\cuc)}
\def\R#1{\ensuremath{\mathbb R^{#1}}}

\newcommand{\xnk}{\ensuremath{S_{n,k}}}
\newcommand{\unk}{\ensuremath{U_{n,k}}}
\newcommand{\fnk}{\ensuremath{f_{n,k}}}
\newcommand{\xxnk}{\ensuremath{x_{n,k}}}
\newcommand{\hn}{\ensuremath{h_{n}}}

\psset{nodesep=5pt,arm=7pt,linearc=5pt}

\begin{document}

\title{Densely defined non-closable curl
 on carpet-like metric measure spaces}
\author{Michael Hinz$^1$}
\address{$^1$ Fakult\"at f\"ur Mathematik, Universit\"at Bielefeld, Postfach 100131, 33501 Bielefeld, Germany}
\email{mhinz@math.uni-bielefeld.de}

\author{Alexander Teplyaev$^2$}
\address{$^2$ Department of Mathematics, University of Connecticut, Storrs, CT 06269-3009 USA}
\email{teplyaev@member.ams.org}
\thanks{$^2$Research supported in part by NSF grant DMS-1613025}

\date{\today}

\begin{abstract}
The paper deals with the possibly degenerate behaviour of the exterior derivative operator defined on $1$-forms on metric measure spaces. The main examples we consider are the non self-similar Sierpinski carpets recently introduced by Mackay, Tyson and Wildrick. Although topologically one-dimensional, they may have positive two-dimensional Lebesgue measure and carry nontrivial $2$-forms. We prove that in this case the curl operator (and therefore also the exterior derivative on $1$-forms) is not closable, and that its adjoint operator has a trivial domain. We also formulate a similar more abstract result. It states that for spaces that are, in a certain way, structurally similar to Sierpinski carpets, the exterior derivative operator taking $1$-forms into $2$-forms cannot be closable if the martingale dimension is larger than one. 

\tableofcontents

\end{abstract}
\maketitle

\section{Introduction}\enlargethispage{.2in}

Our paper is a part of a broader program that aims to connect 
research on derivatives on fractals  (\cite{BBST,BK16,
CGIS12,CS03,CS09,
Hino05,Hino08,Hino10,Hino13,HR14,HKT,HTa,HTfgs5,IRT,
Kajino2012,Kajino13,Ki08,Ku89,St00t,T00} and references therein) 
and on more general regular Dirichlet   spaces \cite{H14a,H14b,HRT}
with  classical and geometric 
 analysis
on metric measure spaces (\cite{Am,BBS,Cheeger,Hei01,Hei07,HeiKoShT,book,KSZ1,KSZ2,KST,KZ,Sh00} and references therein). 
In  our previous article \cite{HT} we showed that 
on certain topologically one-dimensional spaces with 
a strongly
local regular Dirichlet form 
one can prove a natural version of the Hodge theorem for $1$-forms defined in $L^2$-sense: 
the set of harmonic $1$-forms is dense in the orthogonal complement of the exact $1$-forms. 
In this context harmonic $1$-forms appear as limits of $1$-forms that are locally harmonic, i.e.
locally representable as differentials of harmonic functions, \cite[Theorem 4.2]{HT}. This result is complicated 
because in many interesting fractal examples the space of harmonic forms is 
infinite dimensional in a very strong sense, more precisely, its restriction to any non-empty open subset is infinite dimensional.  
In this paper we discuss a question that deals with the exterior derivative operator defined on $1$-forms. It may happen that although the space is topologically one-dimensional, there are nontrivial $2$-forms in the $L^2$-sense. Examples can be found amongst the non-self-similar Sierpinski carpets of positive two-dimensional Lebesgue measure introduced in \cite{MTW} (see Figure~\ref{fig:sc}). 

\begin{figure}[htbp]
	\includegraphics[height=.33\textwidth]{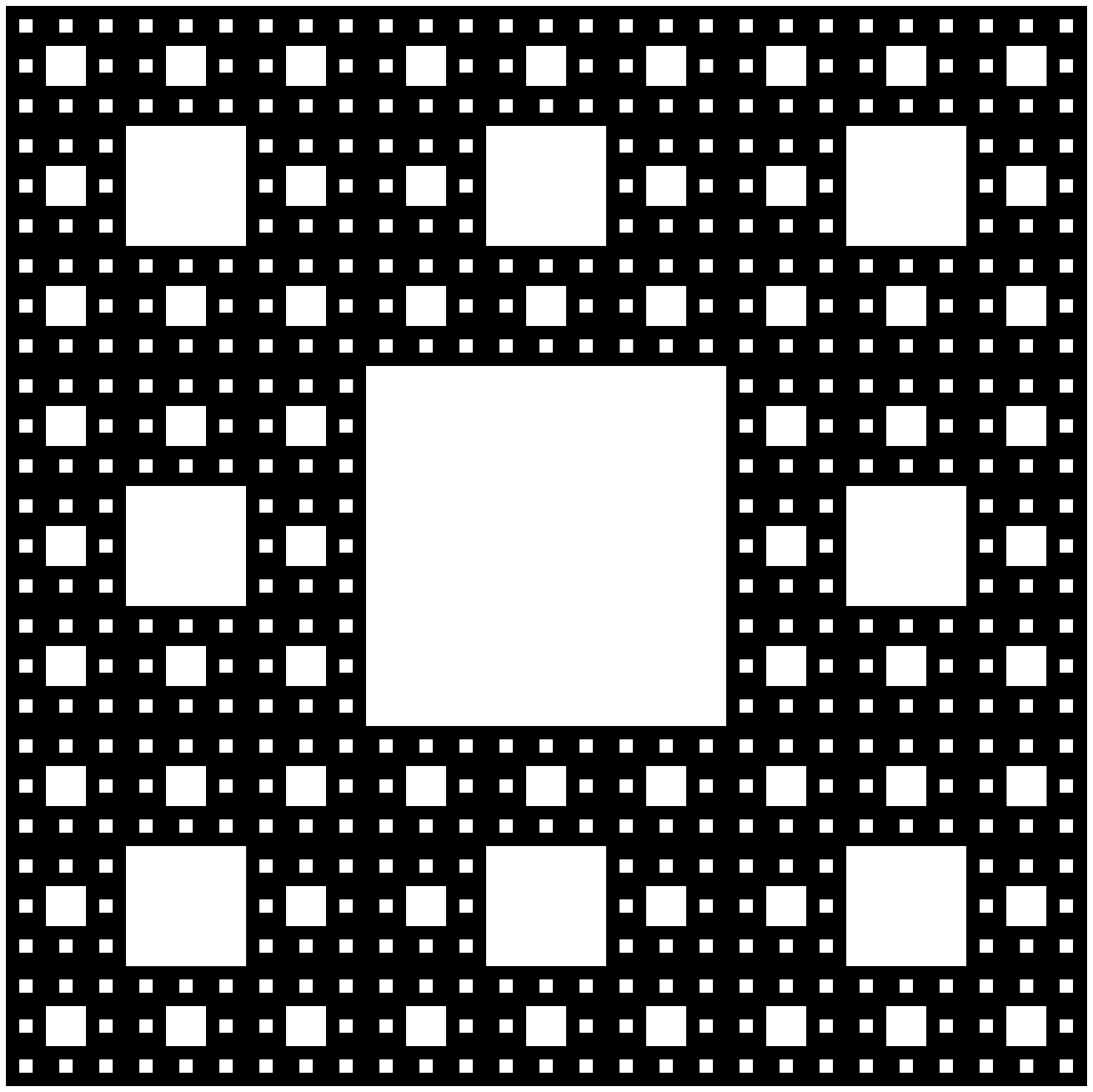}
	\quad\hfil\quad
	\includegraphics[height=.33\textwidth]{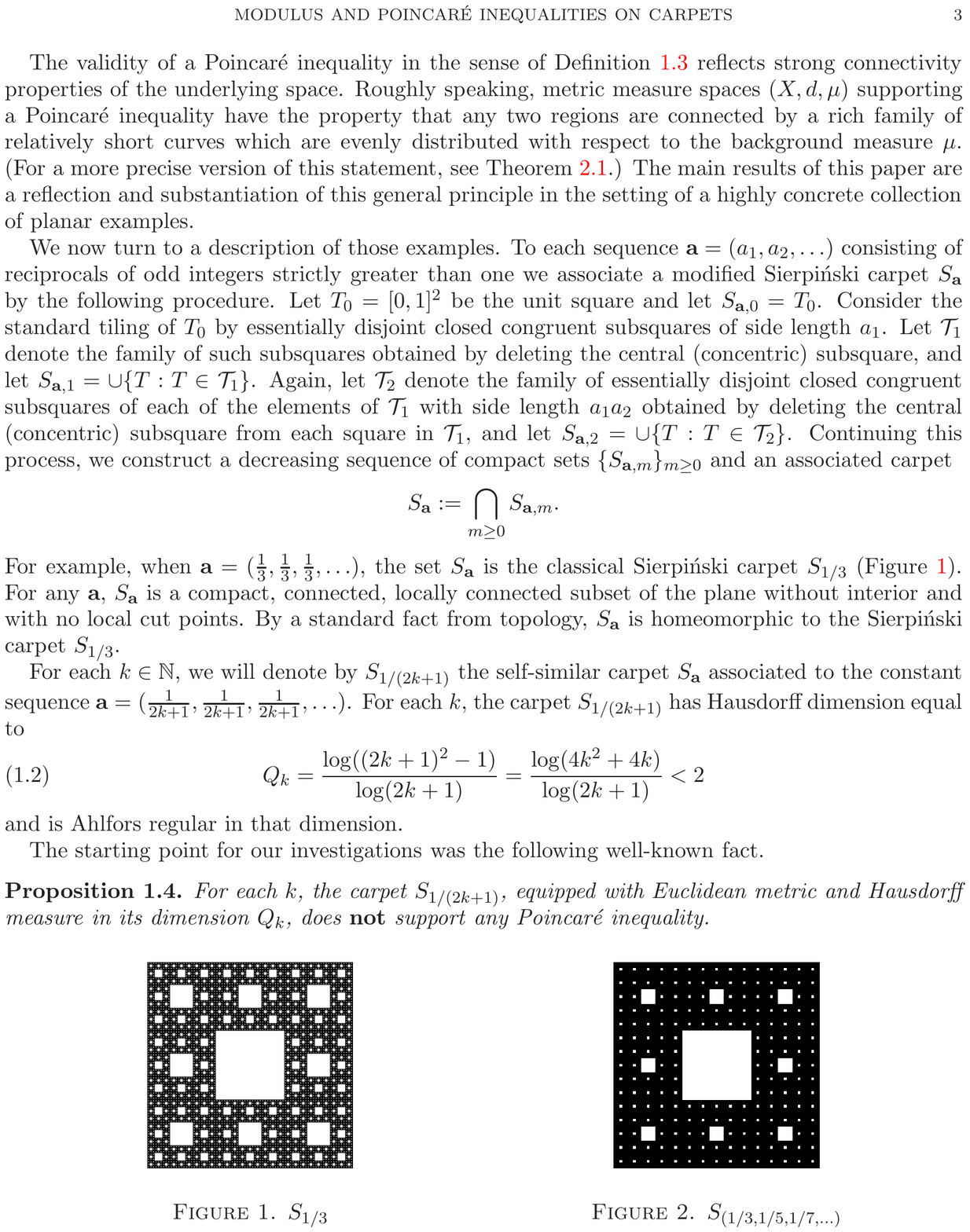}
	\caption{On the left, the standard self-similar 
	Sierpinski carpet. 
	On the right, a non-self-similar Sierpinski carpet
	$S_{(1/3,1/5,1/7,...)}$
	 from the paper of Mackay, Tyson and Wildrick, \cite{MTW} (see page 3 of  \href{http://arxiv.org/pdf/1201.3548v2}{arXiv:1201.3548} for the precise definition)}	
	 \label{fig:sc}
\end{figure}

On such a carpet $S$ we can consider the classical notions of energy and exterior derivation by restricting smooth functions and forms from $\mathbb{R}^2$ to $S$. In particular, applying the (classical) exterior derivation to smooth $1$-forms on $\mathbb{R}^2$ and restricting the resulting $2$-forms to $S$, we can observe the existence of nonzero square integrable $2$-forms on $S$. On the other hand, if the holes in the carpet are not too small, then \cite[Theorem 4.2]{HT} applies (see Remark \ref{R:connect} below) and tells that within the $L^2$-space of $1$-forms the locally harmonic $1$-forms are dense in the orthogonal complement of the exact $1$-forms. Here we call a square integrable $1$-form $\omega$ locally harmonic if we can find a finite open cover $\left\lbrace U_\alpha\right\rbrace_{\alpha\in J}$ of $S$ and functions $h_\alpha$, $\alpha\in J$, harmonic in the Dirichlet form sense, such that for any $\alpha\in J$ we have $\omega \mathbf{1}_{U_\alpha}=dh_\alpha \mathbf{1}_{U_\alpha}$, where $d$ denotes the exterior derivation. Now assume there is a closed extension of the classical exterior derivation $d$ on $1$-forms to a closed and densely defined unbounded linear operator on the $L^2$-space of $1$-forms. Since $d\circ d=0$, its application to a locally harmonic $1$-form $\omega$ would give $d\omega=0$. Therefore $d$, seen in the $L^2$-sense, would give zero on its entire domain, seemingly contradicting the existence of nonzero square integrable $2$-forms. However, what we observe in this paper is that, roughly speaking, if the holes in a generalized carpet are not too small, then the exterior derivation on $1$-forms is not closable in the $L^2$-sense. So there is no contradiction, but an interesting difference between the classical and the $L^2$-formulation.

On carpets of positive two-dimensional Lebesgue the exterior derivative on $1$-forms can be expressed using the 
curl operator.
Since our main motivation comes from quantum physics, especially \cite[pages 850-862]{AkkermansMallick},
the closability of $\cu$ and the domain of definition of the adjoint operator 
$\cu^*$, seen as operators between Hilbert spaces, are of special interest. If the diameter of the holes in the carpet does not decrease too rapidly and the domain of $\cu$ contains all smooth vector fields, we can observe the non-closability of $\cu$, see Corollary \ref{C:notclosable}. However, in this situation we can even state a stronger result, namely that the domain of its adjoint $\cu^\ast$ must be trivial, Theorem \ref{thm01}. The proof of this theorem is elementary and graphical, based on the specific structure of generalized non-self-similar Sierpinski carpets. 

We also prove a similar result in a more abstract setup. Suppose that $X$ is a compact metric space, $\mu$ a finite Radon measure on $X$ with full support and $(\mathcal{E},\mathcal{F})$ a strongly local regular Dirichlet form  on $L^2(X,\mu)$. We additionally assume that we are given a finite energy dominant measure $m$ for $(\mathcal{E},\mathcal{F})$ and an algebra $\mathcal{A}$ of functions with $m$-essentially bounded energy densities that provides a special standard core for $(\mathcal{E},\mathcal{F})$. It is always possible to find such measures and algebras. The $L^2$-space of $1$-forms can be defined without ambiguity. Using the algebra $\mathcal{A}$ we can introduce an $L^2$-space of $2$-forms with respect to the measure $m$. The $m$-essential supremum of the dimensions of the corresponding abstract cotangent spaces, clearly integer valued, is referred to as the \emph{martingale dimension}, \cite{Hino10}. The second version of our result is stated for spaces that, in a certain way, have a similar structure as Sierpinski carpets, see Assumption \ref{A:carpetlike}. Theorem \ref{T:general} states that under this assumption either the martingale dimension equals one or the exterior derivation on $1$-forms, considered on a certain dense initial domain, is not closable.

The existence of a non-closable curl or, respectively, exterior derivation, may be seen as part of a discussion about the role of dimensions of metric measure spaces carrying a diffusion. The spectral dimension is most significant for elliptic and parabolic equations for scalar functions. For vector equations we already observed in \cite{HT} that other structural properties may be relevant. There we promoted a version of a Navier-Stokes system on topologically one-dimensional spaces and observed its simplification to an Euler type equation 
that has infinitely many nontrivial steady state unique weak solutions. We formulated the system in a way that assumes that there are no nontrivial $2$-forms. (Note, however, that all other results in \cite{HT} are absolutely independent of this question.)  

There exists an extensive literature 
that establishes the relation between 
function spaces 
 on metric measure spaces and the theory of Dirichlet forms. 
In particular the reader can consult the papers 
\cite{BBKT,BBS,KSZ1,KSZ2,KST,KZ} 
and references therein. Our current paper is a step in a long-term program 
(see 
\cite{H14a,H14b,HR14,HKTg,HKT,HRT,HTa,HT,HTfgs5,IRT}) 
to develop parts of differential  
geometry and their applications to mathematical physics 
(see \cite{ADT,ADT2010PRL,ADTV})  for spaces 
that carry diffusion processes but no other smooth structure. 
Our approach is somewhat complementary to the celebrated works 
\cite
{Cheeger,Hei01,Hei07} 
because, although our spaces are metrizable, 
we do not use any particular metric in an 
essential way, and we do not use functional inequalities. A  different approach to the differential forms and the 
Hodge - de Rham theory on fractal graphs and fractals  is introduced in 
 \cite{Str-hodge}. It will be a subject of future studies to establish the  
 connection between  
 \cite{Str-hodge} and our work. 
 
For symmetric bilinear expressions $\mathcal{B}(f,g)$ of two arguments $f$ and $g$ we use the notation $\mathcal{B}(f):=\mathcal{B}(f,f)$ for the associated quadratic expression $\mathcal{B}$. Recall also that given a quadratic expression $\mathcal{Q}(f)$ of one argument $f$, polarization yields a symmetric bilinear expression $\mathcal{Q}(f,g)$ of two arguments.

\subsection{Acknowledgment} 
The authors are very grateful to Eric Akkermans and Gerald Dunne for interesting and helpful discussions and to Naotaka Kajino and Jun Kigami for pointing out some necessary corrections.


\section{Sierpinski carpets of positive two-dimensional Lebesgue measure}\label{S:sc}

In this section we discuss prototype examples given by generalized Sierpinski carpets and corresponding restrictions of the classical energy form and the classical curl on $\mathbb{R}^2$.

 We recall a construction studied in \cite[Section 1]{MTW}. Let $\mathbf{a}=(a_1,a_2,\dots)$  be a sequence of positive reals such that for any $i$ the number $\frac{1}{a_i}$ is an odd integer strictly greater than one. Let $S_{\mathbf{a},0}:=[0,1]^2$ be the unit square, we can rewrite it as the union of $(\frac{1}{a_1})^2$ congruent closed subsquares of side length $a_1$ that touch only at their boundaries. Let $\mathcal{T}_1$ be the family of all such subsquares except the central one and put $S_{\mathbf{a},1}:=\bigcup_{T\in\mathcal{T}_1} T$. Next, let $\mathcal{T}_2$ be the family of all congruent closed subsquares of side length $a_1a_2$, touching only at the boundaries, obtained by subdividing each element of $\mathcal{T}_1$ in a similar way as $[0,1]^2$ and discarding the central squares, respectively. Set $S_{\mathbf{a},2}:=\bigcup_{T\in \mathcal{T}_2} T$. Further iteration of this construction process yields a decreasing sequence $\left\lbrace S_{\mathbf{a},m}\right\rbrace_{m\geq 0}$ of nonempty compact sets $S_{\mathbf{a},m}$. To 
\[S_{\mathbf{a}}:=\bigcap_{m\geq 0} S_{\mathbf{a},m}\] 
we refer as the \emph{generalized Sierpinski carpet associated with the sequence $\mathbf{a}$}.
For the constant sequence $\mathbf{a}=(\frac13, \frac13, \dots)$ one obtains the classical Sierpinski carpet. Any $S_{\mathbf{a}}$ is a compact subset of $\mathbb{R}^2$, and we consider $S_{\mathbf{a}}$ with the relative topology (induced topology). Then any nonempty open subset of $S_{\mathbf{a}}$ is topologically one-dimensional. If $\mathbf{a}\in l_2$, then any nonempty open subset of $S_{\mathbf{a}}$ has positive two-dimensional Lebesgue measure $\lambda^2$, see \cite[Proposition 3.1 (iv)]{MTW}, and the restriction of $\lambda^2$ to $S_{\mathbf{a}}$ is Ahlfors $2$-regular.

\begin{examples}\label{Ex:simpleex}
In Figure \ref{fig:sc} on the right hand side we have $a_n=\frac{1}{2n+1}\in l_2$, and so any nonempty open subset of the carpet $S_{\mathbf{a}}$ associated with this sequence $\mathbf{a}=(a_n)_{n\geq 1}$ has positive $\lambda^2$ measure. On the left hand side of this figure 
we have the standard self-similar 
	Sierpinski carpet, which has zero Lebesgue measure and  
	$a_n\equiv\frac{1}{3}\notin l_2$.
\end{examples}

Let $\mathbf{a}\in l_2$ be fixed and consider $S:=S_{\mathbf{a}}$. We write $L^2(S)$ for the space of (classes of) functions on $S$ that are square integrable with respect to $\lambda^2$. The restriction to $S$ of the usual Dirichlet integral, defined for any $f\in C^1(\mathbb R^2)$ by 
\[\Ex(f) =\int_{S} |\nabla f(x,y)|^2d(x,y),\]
extends to a local regular Dirichlet form $(\Ex, \dd)$ on $L^2(S)$. 

\begin{remark} The last statement can be seen following arguments similar to the ones given in \cite[p. 246-247]{KST}: For any rectifiable curve $\gamma:[a,b]\to S$ and any $f\in C^1(\mathbb{R}^2)$ the line integral $\int_\gamma (\nabla f)ds$ of its gradient $\nabla f$ along $\gamma$ equals $f(\gamma(b))-f(\gamma(b))$. This implies that for $f\in C^1(\mathbb{R}^2)$ the function
$|\nabla f|$ is the minimal upper gradient of $f:S\to \mathbb{R}$, and since $|\nabla f|\in L^2(S)$, any function $f\in C^1(\mathbb{R}^2)$ is a member of the (Newtonian) Sobolev space $N^{1,2}(S)$ in the sense of \cite[Definition 2.5]{Sh00}, see also \cite[Definition 3.1]{HeiKoShT}. Moreover, there is some $c>0$ such that 
\[c\left\|f\right\|_{N^{1,2}(S)}^2\leq \Ex(f)+\left\|f\right\|_{L^2(S)}^2\leq c^{-1}\left\|f\right\|_{N^{1,2}(S)}^2,\ \ f\in C^1(\mathbb{R}^2).\]
This implies the closability of $(\Ex, C^1(\mathbb{R}^2))$ in $L^2(S)$. Its closure $(\Ex, \dd)$ is regular, note that by Tietze's extension lemma and local polynomial approximation the $C^1(\mathbb{R}^2)$-functions are dense in $C(S)$.
\end{remark}

\begin{remark} Seen as a subset of $\mathbb{R}^2$, the set $S$ is fairly complicated. In particular, the complement $S^c=\R2\backslash S$ of $S$ is an open set that is everywhere dense, but does not have full Lebesgue measure $\lambda^2$ in any neighborhood of any point of $S$. The topological boundary  of $S$ in the topology of $\mathbb R^2$ coincides with $S$, that is, every point of $S$ is a boundary point if we consider the usual $\R2$ topology.

Endowed with the induced topology, $S$ has no intrinsic boundary. Note that, because of the remark in the next paragraph, there are no boundary terms in the  Green's formula \eqref{eGreen} on $S$. 
\end{remark}

\begin{remark}
The existence of the closed energy form $(\Ex, \dd)$ has many remarkable consequences, including the existence of the unique non-negative self-adjoint Laplacian $\Dx$ on $S$, corresponding to $\Ex$ and $\lambda^2$ restricted to  $S$. 
This Laplacian $\Dx$ can be defined weakly via the usual relation 
\begin{equation}
\Ex(f,g) =\int_{S}  f(x,y)\Dx \:g(x,y)d(x,y)	 
\label{eGreen}
\end{equation}
for $g$ in the domain of $\Dx$ and $f\in \dd$. Although this is formula can be considered a version of the classical Green's formula, the Laplacian $\Dx$ is not given as the sum of the second derivatives of twice continuously differentiable functions.

Note also that although the domain $\dd$ of the Dirichlet form 
$\Ex$ can be identified with a version of a Sobolev space, 
the domain of the Laplacian  $\Dx$ maybe more complicated and is only weakly defined. 
In particular, the are no reasons to think that the domain of  $\Dx$ contains any 
non-constant $C^2(\mathbb R^2)$ functions (see \cite{BBST} for some related questions). 
This is because in an open region with smooth boundary the domain of the Laplacian consists of functions with vanishing normal derivative at the boundary, but in the case of $S$ with a dense complement $S^c=\R2\backslash S$, this would mean a really singular behavior of the derivative. 
In fact it is natural to conjecture that the 
  domain of  $\Dx$ contains no 
non-constant $C^1(\mathbb R^2)$ functions restricted to $S$. 
\end{remark}

In \R2\  the curl is 
defined for a smooth enough vector field $u=(u_1,u_2):\mathbb R^2\to\mathbb R^2$ 
by
$$\cu u (x,y)= \frac{\partial u_2(x,y)}{\partial x}-\frac{\partial u_1(x,y)}{\partial y}.$$ 
Of course we have in mind the usual relation $\cu u=\nabla\times u$ 
which can be justified in the sense that 
one can consider a three-dimensional vector field 
$(u_1,u_2,0)$ and compute, in three dimensions, that 
$\nabla\times(u_1,u_2,0)=(0,0,\cu u)$. Thus, $\cu u$ is the third component of 
the three dimensional vector field $\nabla\times(u_1,u_2,0)$. 
At the same time the $\cu$ operator can be described using the 
notion of exterior derivative $\text d$ of 
differential 1-forms in the sense that we have 
$$\text d(u_1(x,y)\text dx+u_2(x,y)\text dy)=\cu u (x,y)\text dx\wedge\text  dy$$ 
When discussing the situation on $\R2$, we can identify vector fields $u=(u_1, u_2)$ and $1$-forms $u_1dx+u_2dy$. 

For a vector field $u=(u_1,u_2)$ with $u_1,u_2\in C^1(\mathbb{R}^2)$ the function $\cu u$ is continuous, its pointwise restriction to $S$ makes sense and is a member of $L^2(S)$. Therefore $(\cu, C^1(\mathbb{R}^2))$ may be seen as a densely defined unbounded operator from $L_2(S,\R{2})$ into $L^2(S)=L^2(S,\R{})$. We slightly reformulate this situation by endowing $\cu$ with a dense domain $\domc$ and asking what happens to the domain $\domcc$ of its adjoint $\cu^\ast$. This of course also decides whether $(\cu, \domc)$ is closable or not.
The following theorem is a version of our main result in the \R2\ case.

\begin{theorem}\label{thm01}
Let $\mathbf{a}\in l_2$ be a sequence such that 
\begin{equation}\label{E:condan}
\lim_n\frac{a_1\cdots a_{n-1}}{a_n}=0
\end{equation}
and consider $S=S_{\mathbf{a}}$. If $\domc \subset L_2(S,\R{2})$ 
contains all smooth 
vector fields, then $\domcc\subset L^2(S)$ is trivial, $\domcc=\{0\} $. 
\end{theorem}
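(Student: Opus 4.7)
The plan is to fix $\phi\in\domcc$, set $v:=\cuc\phi\in L^2(S,\R2)$, and show $\phi=0$ in $L^2(S)$ by testing the adjoint identity $\int_S (\cu u)\,\phi\,d\lambda^2 = \int_S u\cdot v\,d\lambda^2$ against a family of smooth ``unit vortices'' attached to the holes of the carpet. For each level-$n$ hole $H$ with centre $x_H$, contained in its level-$(n-1)$ enclosing square $\Sigma$ of side $L_n:=a_1\cdots a_{n-1}$ (so the hole side is $\ell_n:=a_nL_n$), I would define in polar coordinates around $x_H$
\[
u_H(z) = f(|z-x_H|)\,\hat\theta,
\]
with $f$ smooth, vanishing at $r=0$ and for $r\ge 3L_n/8$, equal to $1/(2\pi r)$ on the intermediate Biot--Savart annulus $\ell_n/4\le r\le L_n/4$, and smoothly interpolating over the two transition rings. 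By construction $u_H$ is smooth and supported strictly inside $\Sigma$; it has unit circulation around $x_H$; and Stokes's theorem on concentric disks gives $\cu u_H\equiv 0$ on the Biot--Savart annulus, $\int_{|z-x_H|\le\ell_n/4}\cu u_H = +1$ (contained in $H\subset S^c$), and $\int_{A}\cu u_H=-1$ on the outer smoothing annulus $A:=\{L_n/4<|z-x_H|<3L_n/8\}$.

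Because $\lambda^2(H\cap S)=0$, only the outer annulus contributes to $\int_S(\cu u_H)\phi\,d\lambda^2$. Rescaling $z=x_H+L_n y$ writes $\cu u_H$ as $L_n^{-2}\tilde g(y)$ for a fixed profile $\tilde g$ with $\int\tilde g=-1$; combined with the Lebesgue density theorem for $S$ at scale $L_n$ (using the tail estimate $\prod_{k>n}(1-a_k^2)\to 1$, which follows from $\mathbf a\in l_2$) and the strong Lebesgue differentiation theorem for $\phi$, one obtains, at a.e.\ Lebesgue point $x\in S$ with $H$ chosen so that $x\in\Sigma$ (whence $|x_H-x|\le L_n/\sqrt2$),
\[
\int_S (\cu u_H)\,\phi\,d\lambda^2 \;\xrightarrow[n\to\infty]{}\; -\phi(x).
\]
On the other side, the logarithmic growth of the Biot--Savart kernel gives $\|u_H\|_{L^2(S)}^2\le \|u_H\|_{L^2(\R2)}^2\lesssim \ln(L_n/\ell_n)=\ln(1/a_n)$, while Lebesgue differentiation of $|v|^2$ gives $\|v\|_{L^2(B(x_H,3L_n/8))}\lesssim L_n\,|v(x)|$, so Cauchy--Schwarz yields
\[
\Bigl|\int_S u_H\cdot v\,d\lambda^2\Bigr| \;\lesssim\; L_n\sqrt{\ln(1/a_n)}\,|v(x)|.
\]
Equating both sides of the adjoint identity and using the hypothesis \eqref{E:condan}, which together with $a_n\to 0$ forces $L_n\sqrt{\ln(1/a_n)}\to 0$, one concludes $|\phi(x)|=0$ at a.e.\ $x\in S$, hence $\phi=0$ and $\domcc=\{0\}$.

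The hardest part will be rigorously handling the ``off-centred'' Lebesgue differentiation: the centres $x_H$ lie in $S^c$, not in $S$, so the averaging must be run along a sequence $x_H\to x$ with $|x_H-x|$ staying within a bounded multiple of the ball radius. This is essentially Besicovitch-type strong differentiation, together with the carpet-specific observation that $\lambda^2(S\cap B(x_H,cL_n))/\lambda^2(B(x_H,cL_n))\to 1$ as $n\to\infty$ even though $x_H\notin S$, since the deeper-level holes enclosed in $\Sigma$ fill only an asymptotically vanishing fraction of $\Sigma$.
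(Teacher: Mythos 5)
Your proposal is correct in outline and the key estimates check out, but it proves the theorem by a genuinely different route than the paper. The paper argues by contradiction with a single global construction: it covers $S$ by the sets $S_{n,k}$, builds functions $g_n$ (a sheared coordinate $\varphi_n$ minus tent functions $\psi_n$ threaded through the vertical Cantor cross-sections) that are locally constant on neighborhoods of $\partial S_{n,k}$ while $\nabla g_n\to(0,1)$ in $L^2(S,\R2)$, and sets $v_n=h_n\nabla g_n$ with $\|h_n\|_{\sup}\le \frac1n\|f\|_{\sup}$; then $\cu v_n\to f$ while $v_n\to 0$, contradicting $\left\langle u,f\right\rangle_{L^2(S)}>0$ for a putative $0\ne u\in\domcc$. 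Condition (\ref{E:condan}) enters there through the energy bound $\Ex(\psi_n)\lesssim a_1\cdots a_n+(a_1\cdots a_{n-1})/a_n$. You instead test the adjoint identity against localized vortices whose concentrated positive curl is swallowed by a hole, so that on $S$ only the spread-out $O(L_n^{-2})$ negative part survives, and you conclude $\phi(x)=0$ at a.e.\ Lebesgue point directly, with the vortex's logarithmic $L^2$-cost beaten by $L_n\sqrt{\ln(1/a_n)}\to 0$. What your approach buys: it is local and pointwise rather than a global approximation scheme, it avoids the Cantor-set/tent constructions entirely, and it uses (\ref{E:condan}) only through the much weaker consequence $L_n\sqrt{\ln(1/a_n)}\to 0$ (together with $\mathbf a\in l_2$ for the density statement), so it would in fact relax the hypothesis, in the spirit of the paper's own remark that (\ref{E:condan}) should not be optimal. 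What the paper's construction buys is that it generalizes: the same scheme (functions locally constant near the boundaries of a fine cover) is exactly what drives the abstract Theorem \ref{T:general}, where no ambient coordinates or vortex kernels are available.

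Two small points to tighten. First, to have $u_H$ smooth at the centre, let $f$ vanish identically on a neighborhood of $r=0$ (say $r\le\ell_n/8$) rather than merely at $r=0$; this costs nothing since that region lies inside the hole. Second, the step you flag as hardest is actually routine and needs neither Besicovitch differentiation nor, strictly speaking, the tail estimate $\prod_{k>n}(1-a_k^2)\to1$: extend $\phi$ and $v$ by zero to $\R2$, note $\supp(\cu u_H)\subset B(x_H,3L_n/8)\subset B(x,2L_n)$ with $\|\cu u_H\|_\infty\lesssim L_n^{-2}$ on the outer annulus and $\int_A \cu u_H\,d\lambda^2=-1$ exactly over $\R2$, and write $\int_A (\cu u_H)\widetilde\phi\,d\lambda^2=-\phi(x)+\int_A (\cu u_H)(\widetilde\phi-\phi(x))\,d\lambda^2$; the error is $o(1)$ at every (centered) Lebesgue point of the zero-extension, which is a.e.\ point of $S$, and the same device bounds $\|v\|_{L^2(B(x_H,3L_n/8)\cap S)}\lesssim L_n(|v(x)|+o(1))$. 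With these adjustments your argument is complete.
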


The condition that $\mathbf{a}\in l_2$ forces the size of the holes to decrease sufficiently fast, condition (\ref{E:condan}) on the other hand requires them not to decrease extremely fast.

\begin{examples}
If $a_n=\frac{1}{2n+1}$ for all $n$ as in Examples \ref{Ex:simpleex}, then $\mathbf{a}\in l_2$ and (\ref{E:condan}) is satisfied.
\end{examples}
\begin{remark}
We believe that the condition \ref{Ex:simpleex} can be relaxed substantially, in particular using the methods of \cite{Str-graph}, but we present here only a simplified construction 
because it is sufficient for our purpose. 
\end{remark}

The next Corollary is immediate from \cite[Theorem VIII.1]{RS}.

\begin{corollary}\label{C:notclosable} 
Under conditions of Theorem~\ref{thm01} $\cu$ is not closable. 
\end{corollary}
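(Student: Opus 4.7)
The plan is to deduce the corollary directly from Theorem \ref{thm01} by invoking a standard Hilbert-space criterion: a densely defined linear operator $T:\mathcal{H}_1\to\mathcal{H}_2$ between Hilbert spaces admits a closed extension (equivalently, has a well-defined closure) if and only if its adjoint $T^\ast$ is densely defined in $\mathcal{H}_2$. This is the content of \cite[Theorem VIII.1]{RS}.

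First I would apply this criterion with $T=(\cu,\domc)$, taking $\mathcal{H}_1=L_2(S,\R{2})$ and $\mathcal{H}_2=L^2(S)$. The hypothesis of the corollary is that we are under the conditions of Theorem \ref{thm01}, so in particular $\domc$ contains all smooth vector fields on $\mathbb{R}^2$ (restricted to $S$) and is therefore dense in $L_2(S,\R{2})$; the criterion is applicable. Under this identification, closability of $\cu$ is equivalent to $\domcc$ being dense in $L^2(S)$.

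Next I would invoke Theorem \ref{thm01}, which under the stated hypotheses yields the much stronger conclusion $\domcc=\{0\}$. Since $\mathbf{a}\in l_2$ implies that $S=S_{\mathbf{a}}$ has positive two-dimensional Lebesgue measure (by \cite[Proposition 3.1 (iv)]{MTW}, as recalled in Section \ref{S:sc}), the target space $L^2(S)$ is nontrivial, so the trivial subspace $\{0\}$ is evidently not dense in $L^2(S)$. This contradicts the density required by the criterion, and hence $\cu$ is not closable.

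There is essentially no substantive obstacle; the only point worth checking is a purely notational one, namely that the adjoint appearing in \cite[Theorem VIII.1]{RS} coincides with the operator $\cu^\ast$ whose domain is analyzed in Theorem \ref{thm01}. Both are the Hilbert-space adjoint of the same densely defined operator $(\cu,\domc)$ between $L_2(S,\R{2})$ and $L^2(S)$, so the identification is immediate and the corollary follows.
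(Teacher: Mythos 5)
Your argument is correct and is exactly the paper's route: the corollary is deduced from Theorem \ref{thm01} together with the standard criterion of \cite[Theorem VIII.1]{RS} that a densely defined operator is closable if and only if its adjoint is densely defined, noting that $\domcc=\{0\}$ is not dense since $L^2(S)$ is nontrivial. Nothing further is needed.
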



\section{Proof of Theorem \ref{thm01}}

We give a proof by contradiction. 
Suppose $u\in\domcc\subset L^2(S)$, $u\neq0$ and $\cuc u=w\in L^2(S,\R{2})$. 
Then there is a smooth function $f$ approximating $u$, and in particular 
$\left\langle u,f\right\rangle_{L_2(S)}>0$. 
Our main claim is that we can 
  construct a sequence of smooth vector fields $v_n$ 
that satisfies the following two conditions:
\begin{enumerate}[label=(\alph*)]  
	\item\label{c01a} $\lim_{n\to\infty}\cu v_n=f$ in $L^2(S)$; 
	\item\label{c01b} $\lim_{n\to\infty}v_n=0$ in $L_2(S,\mathbb{R}^2)$. 
\end{enumerate}
If these two items are satisfied, we observe the contradiction
\begin{equation*}
	0=\lim_{n\to\infty}\left\langle w,v_n\right\rangle_{L_2(S,\mathbb{R}^2)}=\lim_{n\to\infty}\left\langle \cuc u,v_n\right\rangle_{L_2(S,\mathbb{R}^2)}=\lim_{n\to\infty}\left\langle u,\cu v_n\right\rangle_{L_2(S)}=\left\langle u,f\right\rangle_{L_2(S)}>0,
\end{equation*}
and this completes the proof. 

The main technical ingredient is to demonstrate that there exists a sequence of smooth vector fields $v_n$ satisfying \ref{c01a} and \ref{c01b} above. 
To construct $v_n$, we first cover $S$ by the compact subsets $\xnk$ (in relative topology)
obtained by taking parallels to the coordinate axes through the midpoints of all holes of size 
\begin{equation}\label{E:deltan}
\delta_n:=a_1\cdots a_n
\end{equation}
that are created at stage $n$ of the construction of $S_{\mathbf{a}}$, see Figure \ref{F:Snk}. They intersect each other over Cantor sets
, and the diameter of each \xnk\ is bounded by $\sqrt{2}\delta_{n-1}$. 

\begin{figure}
   \centering 
\begin{tikzpicture}
\draw[black, very thick] (0,0)rectangle (7.5,7.5);
\draw[black, very thick] (3,3)rectangle (4.5,4.5);
\draw[black, very thick] (3.6,0.6) rectangle (3.9,0.9);
\draw[black, very thick] (3.6,2.1) rectangle (3.9,2.4);
\draw[black, very thick] (0.6,0.6) rectangle (0.9,0.9);
\draw[black, very thick] (0.6,2.1) rectangle (0.9,2.4);
\draw[black, very thick] (0.6,3.6) rectangle (0.9,3.9);
\draw[black, very thick] (2.1,0.6) rectangle (2.4,0.9);
\draw[black, very thick] (2.1,2.1) rectangle (2.4,2.4);
\draw[black, very thick] (2.1,3.6) rectangle (2.4,3.9);
\draw[green, dashed] (3.75,0)--(3.75,7.5);
\draw[green, dashed] (0, 3.75)--(7.5,3.75);
\draw[red, dashed] (0, 0.75)--(7.5,0.75);
\draw[red, dashed] (0, 2.25)--(7.5,2.25);
\draw[red, dashed] (0.75, 0)--(0.75,7.5);
\draw[red, dashed] (2.25, 0)--(2.25,7.5);
\node at (1.5,1.5) {\xnk};
\end{tikzpicture}
\caption{The sets $S_{n,k}$.}\label{F:Snk}
\end{figure}
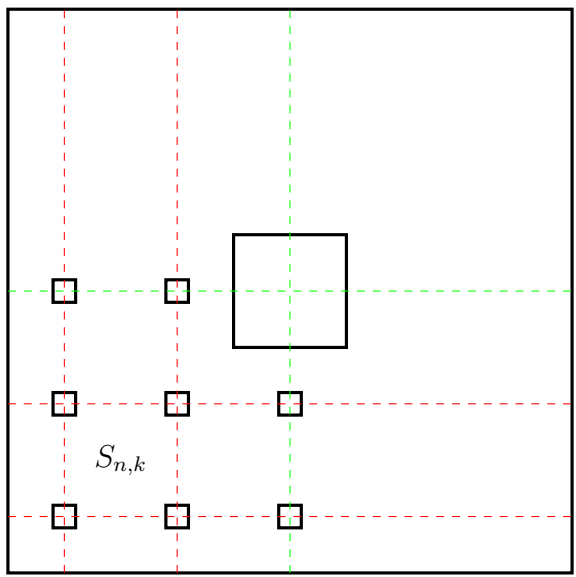
By choosing sufficiently small neighborhoods $\unk$ of the boundaries of 
the sets $\xnk$ we can construct a sequence of energy finite functions $g_n$ such that:
\begin{enumerate}[label=(\roman*)]
\item\label{c01c}
$\nabla g_n$ is arbitrarily close to the vector field $(0,1)$ in 
$L^2(S,\R2)$;
\item\label{c01d} each $g_n$ is locally constant on each \unk. 
\end{enumerate}

For fixed $n$ consider the horizontal Cantor sets that arise as parts of the boundaries of the sets $\xnk$ and are aligned parallel to the $x$-axis. Consider the union $F_n$ of their vertical parallel sets with distance $\delta_n/2$, that is, the union of the horizontal strips of height $\delta_n$ and with the corresponding Cantor set in the middle. Let now $\varphi_n=\varphi_n(x,y)$ be a continuous function on $S$ that is constant in $x$ on $S$, constant in $y$ on $F_n$ and that on each connected component of $S\setminus F_n$ differs from $g(x,y):=y$ by an additive constant, see Figure \ref{F:phin}. Clearly each $\varphi_n$ is the restriction to $S$ of a Lipschitz function and therefore has finite energy. The functions $\varphi_n$ approximate $g$ in energy,
\[\lim_n \Ex(g-\varphi_n)=\lim_n\int_{F_n}(\nabla(g-\varphi_n))^2\:d\lambda^2=\lim_n\lambda^2(F_n)\leq\lim_n\frac{\delta_n}{a_1\cdots a_{n-1}}=\lim_n a_n=0.\]

\begin{figure}
   \centering 
\begin{tikzpicture}
\fill[fill=yellow] (3,1.2)rectangle(4.2,1.8);
\fill[fill=yellow] (3,4.2)rectangle(4.2,4.8);
\fill[fill=yellow] (4.8,1.2)rectangle(6,1.8);
\fill[fill=yellow] (4.8,4.2)rectangle(6,4.8);
\draw[blue] (3,1.2)rectangle(4.2,1.8);
\draw[blue] (3,4.2)rectangle(4.2,4.8);
\draw[blue] (4.8,1.2)rectangle(6,1.8);
\draw[blue] (4.8,4.2)rectangle(6,4.8);
\draw[black, very thick] (3,3)rectangle (6, 6);
\draw[black, very thick] (3,0)rectangle (6,3); 
\draw[black, very thick] (4.2,1.2) rectangle (4.8,1.8);
\draw[black, very thick] (4.2,4.2) rectangle (4.8,4.8);
\draw[red, dashed] (3,1.5)--(4.2,1.5);
\draw[red, dashed] (3,4.5)--(4.2,4.5);
\draw[red, dashed] (4.8,1.5)--(6,1.5);
\draw[red, dashed] (4.8,4.5)--(6,4.5);
\draw[black, very thick] (12,0)--(12,6);
\draw[blue] (12,0)--(10.8,1.2);
\draw[blue] (10.8,1.2)--(10.8,1.8);
\draw[blue] (10.8,1.8)--(8.4,4.2);
\draw[blue] (8.4,4.2)--(8.4,4.8);
\draw[blue] (8.4,4.8)--(7.2,6);
\end{tikzpicture}
\caption{The functions $\varphi_n$.}\label{F:phin}
\end{figure}
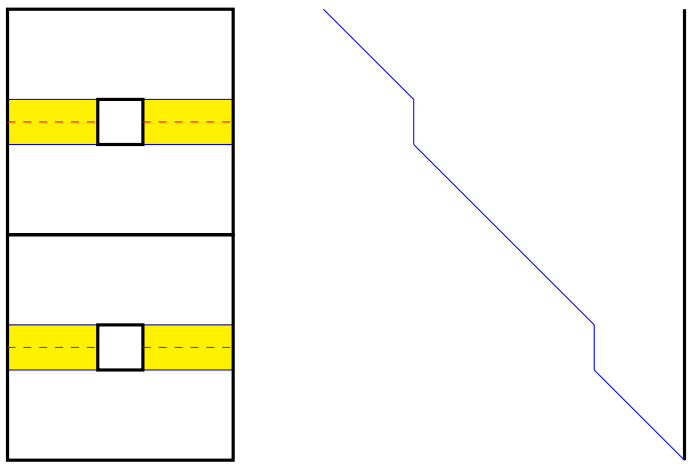
For fixed $n$ consider now the vertical Cantor sets that arise as parts of the boundaries of the sets $\xnk$ and are aligned parallel to the $y$-axis. Connect two vertically adjacent holes of $S$ of side length at least $\delta_n$ by rectangles with horizontal side lengths $\delta_n$ and vertical side lengths $\varepsilon_n$. Further, consider the isosceles trapezoids created by connecting the upper horizontal edges of the lower holes with the centered parts of length $\delta_n/2$ of the lower horizontal edges of the upper holes. Then the vertical Cantor sets are located on the (joint) vertical symmetry axis of these rectangles and trapezoids. Now let $\psi_n$ the function on $[0,1]$ created by putting little tents 
over each rectangle such that $\varphi_n$ is zero on the left, right and lower edge of each rectangle, has value
\begin{equation}\label{E:epsn}
\varepsilon_n:=(1-a_n)(a_1\cdots a_{n-1})
\end{equation}
on the entire upper (short) edge of the trapezoid and is linear in between. For pieces that touch the boundary of $S$ construct $\varphi_n$ as if the interior of $S$ was mirrored to the outside. See Figure \ref{F:psin}. The function $\psi_n$ is nonnegative, Lipschitz and supported in the union of all these small rectangles. The number of such rectangles is bounded by $\frac{2}{a_1\cdots a_{n-1}}$. The restriction to $S$ of these functions has finite energy: Of each tent the energy form $\Ex$ sees only the part over the rectangles between the holes, not the part on the hole. On each trapezoid accommodating one of the Cantor sets between two holes (or a hole and the boundary of $S$) the function $\psi_n$ has slope one in $y$-direction and is constant in $x$-direction, outside the trapezoid it is constant in $y$-direction and has slope $\pm 4\varepsilon_n/\delta_n$ in $x$-direction. Therefore a typical tent contributes the energy 
\[\frac34 \varepsilon_n\delta_n+8\frac{\varepsilon_n^3}{\delta_n},\]
and for situations close to the boundary this value serves as an upper bound. Using (\ref{E:deltan}) and (\ref{E:epsn}) together with the fact that there are less than $\frac{2}{a_1\cdots a_{n-1}}$ tents, we obtain 
\[\Ex(\psi_n)\leq \frac32 (1-a_n) a_1\cdots a_n +16 (1-a_n)^3\:\frac{a_1\cdots a_{n-1}}{a_n},\]
and by (\ref{E:condan}) it follows that $\lim_n\Ex(\psi_n)=0$. The functions $g_n:=\varphi_n-\psi_n$ now satisfy
\[\lim_n\Ex(g-g_n)^{1/2}\leq \lim_n\Ex(g-\varphi_n)^{1/2}+\lim_n\Ex(\psi_n)^{1/2}=0,\]
what is \ref{c01c}. Each $g_n$ is locally constant on the neighborhood $\unk$ of $\xnk$ consisting of two rectangles and two trapezoids (with obvious modifications at the boundary of $S$), see Figure \ref{F:unk}, what shows \ref{c01d}.

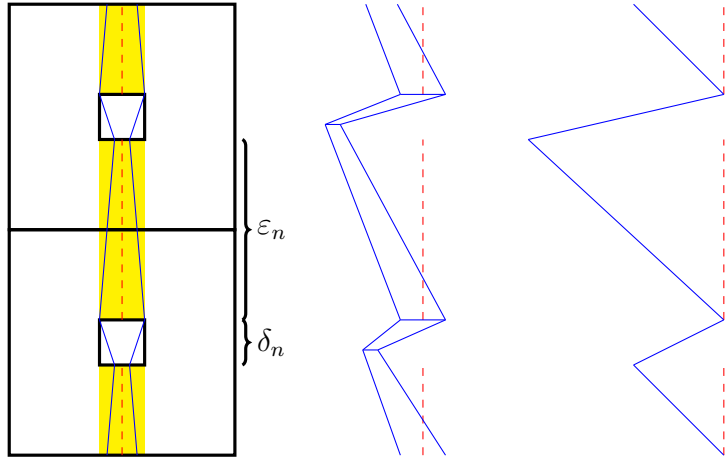
\begin{figure}
   \centering 
\begin{tikzpicture}
\fill[fill=yellow] (4.2,0)rectangle(4.8,1.2);
\fill[fill=yellow] (4.2,1.8)rectangle(4.8,4.2);
\fill[fill=yellow] (4.2,4.8)rectangle(4.8,6);
\draw[black, very thick] (3,3)rectangle (6, 6);
\draw[black, very thick] (3,0)rectangle (6,3); 
\draw[black, very thick] (4.2,1.2) rectangle (4.8,1.8);
\draw[black, very thick] (4.2,4.2) rectangle (4.8,4.8);
\draw[black, very thick, decoration={
        brace,
        mirror,
        raise=0.1cm
    },
    decorate] (6,1.8)--(6,4.2) node[midway,right]{\(\textcolor{black}{\ \varepsilon_n}\)};
\draw[black, very thick, decoration={
        brace,
        mirror,
        raise=0.1cm
    },
    decorate] (6,1.2)--(6,1.8) node[midway,right]{\(\textcolor{black}{\ \delta_n}\)};
\draw[red, dashed] (4.5,0)--(4.5,1.2);
\draw[red, dashed] (4.5,1.8)--(4.5,4.2);
\draw[red, dashed] (4.5,4.8)--(4.5,6);
\draw[blue] (4.8,1.8)--(4.6,4.2);
\draw[blue] (4.2,1.8)--(4.4,4.2);
\draw[blue] (4.6,4.2)--(4.8,4.8);
\draw[blue] (4.4,4.2)--(4.2,4.8);

\draw[blue] (4.8,4.8)--(4.7,6);
\draw[blue] (4.2,4.8)--(4.3,6);
\draw[blue] (4.3,0)--(4.4,1.2);
\draw[blue] (4.7,0)--(4.6,1.2);
\draw[blue] (4.4,1.2)--(4.2,1.8);
\draw[blue] (4.6,1.2)--(4.8,1.8);
\draw[red, dashed] (8.5,0)--(8.5,1.2);
\draw[red, dashed] (8.5,1.8)--(8.5,4.2);
\draw[red, dashed] (8.5,4.8)--(8.5,6);
\draw[blue] (8.2,1.8)--(7.2,4.4);
\draw[blue] (8.8,1.8)--(7.4,4.4);
\draw[blue] (7.4,4.4)--(8.8,4.8);
\draw[blue] (7.2,4.4)--(8.2,4.8);
\draw[blue] (8.2,4.8)--(8.8,4.8);
\draw[blue] (7.2,4.4)--(7.4,4.4);
\draw[blue] (8.2,1.8)--(8.8,1.8);
\draw[blue] (8.8,4.8)--(8.16,6);
\draw[blue] (8.2,4.8)--(7.74,6);
\draw[blue] (8.8,0)--(7.9,1.4);
\draw[blue] (8.2,0)--(7.7,1.4);
\draw[blue] (7.7,1.4)--(8.2,1.8);
\draw[blue] (7.9,1.4)--(8.8,1.8);
\draw[blue] (7.7,1.4)--(7.9,1.4);
\draw[red, dashed] (12.5,0)--(12.5,1.2);
\draw[red, dashed] (12.5,1.8)--(12.5,4.2);
\draw[red, dashed] (12.5,4.8)--(12.5,6);
\draw[blue] (12.5,0)--(11.3,1.2);
\draw[blue] (11.3,1.2)--(12.5,1.8);
\draw[blue] (12.5,1.8)--(9.9,4.2);
\draw[blue] (9.9,4.2)--(12.5,4.8);
\draw[blue] (12.5,4.8)--(11.3,6);
\end{tikzpicture}
\caption{The functions $\psi_n$.}\label{F:psin}
\end{figure}

\begin{figure}
   \centering 
\begin{tikzpicture}
\fill[fill=yellow] (0.9,0.6)rectangle(2.1,0.9);
\fill[fill=yellow] (0.9,2.1)rectangle(2.1,2.4);
\fill[fill=yellow] (0.6,0.9)--(0.675,2.1)--(0.825,2.1)--(0.9,0.9)--cycle;
\fill[fill=yellow] (2.1,0.9)--(2.175,2.1)--(2.325,2.1)--(2.4,0.9)--cycle;
\draw[red,dashed] (0.9,0.75)--(2.1, 0.75);
\draw[red,dashed] (0.75,0.9)--(0.75, 2.1);
\draw[red,dashed] (0.9,2.25)--(2.1, 2.25);
\draw[red,dashed] (2.25,0.9)--(2.25, 2.1);
\draw[black, very thick] (2.1,0.6) rectangle (2.4,0.9);
\draw[black, very thick] (2.1,2.1) rectangle (2.4,2.4);
\draw[black, very thick] (0.6,0.6) rectangle (0.9,0.9);
\draw[black, very thick] (0.6,2.1) rectangle (0.9,2.4);
\node at (1.5,1.5) {\xnk};
\end{tikzpicture}
\caption{The neighborhoods $\unk$ of the boundary of $\xnk$.}\label{F:unk}
\end{figure}
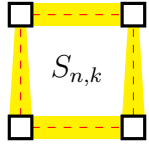

Let \fnk\ be one of the values of the function $f$ on \xnk, 
and let 
\xxnk\ be one of the values of the $x$ coordinate  on \xnk. 
There exists a sequence of smooth functions \hn\ 
such that $$\|\hn\|_{\sup}\leqslant\frac1n\|f\|_{\sup}$$ 
and on each set $\xnk\backslash\unk$ we have 
$$\hn(x,y)=\fnk(x-\xxnk).$$
Then we define 
$$v_n=\hn\nabla g_n.$$
It is evident 
that \ref{c01b} is satisfied 
and that 
$$\cu v_n=\fnk(\nabla g_n)_2+\hn\cu\nabla g_n=\fnk(\nabla g_n)_2$$
where $(\nabla g_n)_2=\frac{g_n(x,y)}{\partial y}$ denotes the second component 
of the vector field $\nabla g_n$. 
Note that by our construction this second component 
$(\nabla g_n)_2$ converges in $L^2(S)$ 
to a function that is identically equal to 1 on $S$, and so 
$\cu v_n$ converges to $f$ in $L^2(S)$. This concludes the proof of Theorem \ref{thm01}.

The key element of our construction is the fact that 
$\nabla g_n$ vanishes on each \unk\ and so we do not have to 
analyze the derivatives of $h_n$ on \unk, although 
one can see that these derivatives can not be small. 


\section{Local Dirichlet forms on carpet-like spaces%
}

The proof of Theorem \ref{thm01} in the preceding section is based on the specific structure of generalized Sierpinski carpets and the coordinate structure of $\mathbb{R}^2$. Under some abstract conditions we can implement its main idea in a more general setup.

Let $(X,\varrho)$ be a compact metric space, $\mu$ a finite Radon measure on $X$  with full support and $(\mathcal{E},\mathcal{F})$ a strongly local regular symmetric Dirichlet form on $L^2(X,\mu)$. We write 
\[\mathcal{E}_1(f,g):=\mathcal{E}(f,g)+\left\langle f,g\right\rangle_{L^2(X,\mu)},\ \ f,g\in\mathcal{F},\] 
for the scalar product in the Hilbert space $\mathcal{F}$. The space $\mathcal{F}\cap C(X)$ is an algebra containing the constant $\mathbf{1}$. The \emph{energy measure} $\Gamma(f,g)$ of $f,g\in \mathcal{F}\cap C(X)$ is defined by
\[2\int_X h d\Gamma(f,g)=\mathcal{E}(fh,g)+\mathcal{E}(gh,f)-\mathcal{E}(fg,h), \ \ h\in\mathcal{F}\cap C(X),\]
and approximation defines $\Gamma(f,g)$ for general $f,g \in\mathcal{F}$. See for instance \cite[Section 3.2]{FOT94}.  Again we write $\mathcal{E}(f)$ for $\mathcal{E}(f,f)$ and $\Gamma(f)$ for $\Gamma(f,f)$. 

\begin{examples}\label{Ex:rmf}
If $M$ is a compact Riemannian manifold, $dvol$ denotes the Riemannian volume and $d$ the exterior derivative, the symmetric bilinear form given by
\[\mathcal{E}(f,g)=\int_M \left\langle d_xf, d_xg\right\rangle_{T_x^\ast M}dvol(x),\ \ f,g\in W^{1,2}(M),\] 
is a strongly local regular Dirichlet form on $L^2(M, dvol)$. The energy measures have densities $x\mapsto \left\langle d_xf,d_xg\right\rangle_{T^\ast_x M}$ with respect to $dvol$.
\end{examples}

Subsequent constructions will be based on Hilbert spaces $\mathcal{H}_x$ that substitute for the cotangent spaces $T_x^\ast M$, they can be constructed using energy densities. If, as in Example \ref{Ex:rmf}, all energy measures are absolutely continuous with respect to $\mu$ one says that $(\mathcal{E},\mathcal{F})$ admits a \emph{carr\'e du champ}, \cite{BH91}, and the corresponding densities can be used. However, for many local Dirichlet forms on fractal spaces the measures $\Gamma(f)$ are typically singular with respect to the reference measure $\mu$, see \cite{BBST, Hino05}. In this case we use a different measure. A nonnegative Radon measure $m$ is called \emph{energy dominant} for $(\mathcal{E},\mathcal{F})$ if for any $f\in\mathcal{F}$ the energy measure $\Gamma(f)$ is absolutely continuous with respect to $m$. See \cite{Hino08, Hino10, Hino13}. 

Recall that a special standard core for the Dirichlet form $(\mathcal{E},\mathcal{F})$ is a subspace of $\mathcal{F}\cap C(X)$ that is a dense subalgebra of $C(X)$ and dense in $\mathcal{F}$, having the Markov property (for smooth contractions), and which for every open $U\subset X$ and compact $K\subset U$ contains a cut-off function
$0\leq \varphi \leq 1$, supported in $U$ and equal to one on $K$. See \cite[Section 1.1]{FOT94}. Our first basic assumption is as follows.

\begin{assumption}\label{A:Adense}
Assume that $\mathcal{A}\subset \mathcal{F}\cap C(X)$ is a special standard core for $(\mathcal{E},\mathcal{F})$ and $m$ is an energy dominant measure for $(\mathcal{E},\mathcal{F})$ such that 
\begin{equation}\label{E:boundeddensities}
\frac{d\Gamma(f,g)}{dm}\in L^\infty(X,m)\ \ \text{ for all $f,g\in \mathcal{A}$.    }
\end{equation}
\end{assumption}

As $m$ is fixed, we write $x\mapsto \Gamma(f,g)(x)$ for the densities $\frac{d\Gamma(f,g)}{dm}$.

\begin{remark}
Such $\mathcal{A}$ and $m$ always exist: 
By folklore arguments we can find an energy dominant measure $m$ and a countable family of functions $\left\lbrace \varphi_k\right\rbrace_{k=1}^\infty\subset \mathcal{F}\cap C(X)$ that is dense in $\mathcal{F}$, separates the points of $X$ and is such that  $\frac{d\Gamma(\varphi_k)}{dm}\in L^\infty(X,m)\ \text{ for all $k$}$, see for instance \cite[Lemma 2.1]{HKT}. Let $\mathcal{A}_0$ denote the algebra generated by $\left\lbrace \varphi_k\right\rbrace_{k=1}^\infty$ and the constants. By Stone-Weierstrass $\mathcal{A}_0$ is uniformly dense in $C(X)$. Now let $\mathcal{A}$ be the algebra of functions $f=F\circ g$, where $g\in\mathcal{A}$ and $F:\mathbb{R}\to\mathbb{R}$ is a Lipschitz function with  $F(0)=0$. By the arguments of \cite[Lemma 1.4.2 and Problem 1.4.1]{FOT94}
the algebra $\mathcal{A}$ provides a special standard core for $(\mathcal{E},\mathcal{F})$. Combining \cite[Corollary 7.1.2]{BH91} and \cite[Corollary 8.3]{H14a} we obtain (\ref{E:boundeddensities}).
\end{remark}

We consider the \emph{Hilbert space $(\mathcal{H},\left\langle\cdot,\cdot\right\rangle_{\mathcal{H}})$ of $L^2$- differential $1$-forms associated with $(\mathcal{E},\mathcal{F})$} and the corresponding first order derivation $\partial_{0}:\mathcal{A}\to \mathcal{H}$
as introduced in \cite{CS03} and studied in \cite{CS09, HKT, HRT, HT, IRT}. This is a generalized $L^2$-theory of $1$-forms. In particular, $\partial_0$ extends to a closed unbounded operator from $L^2(X,\mu)$ into $\mathcal{H}$. There are a measurable field $(\mathcal{H}_x)_{x\in X}$ of Hilbert spaces $(\mathcal{H}_x, \left\langle\cdot,\cdot\right\rangle_{\mathcal{H}_x})$ on $X$ and projections $\omega\mapsto \omega_x$ from $\mathcal{H}$ into $\mathcal{H}_x$ such that 
\begin{equation}\label{E:directint}
\left\langle \omega, \eta\right\rangle_{\mathcal{H}}=\int_X\left\langle \omega_x, \eta_x\right\rangle_{\mathcal{H}_x}\:m(dx), \ \ \omega,\eta\in\mathcal{H}.
\end{equation}
Up to isometry, $\mathcal{H}$ equals the direct integral $L_2(X,(\mathcal{H}_x)_{x\in X},m)$ of the Hilbert spaces $\mathcal{H}_x$, and we have
\begin{equation}\label{E:SPfiber}
\left\langle f_1\otimes g_1, f_2\otimes g_2\right\rangle_{\mathcal{H}_x}:=g_1(x)g_2(x)\Gamma(f_1, f_2)(x)
\end{equation}
for $m$-a.e. $x\in X$.
In particular, $\left\langle (\partial_0f)_x, (\partial_0 g)_x\right\rangle_{\mathcal{H}_x}=\Gamma(f,g)(x)$
for any $f,g\in\mathcal{A}$ and $m$-a.e. $x\in X$. The fibers $\mathcal{H}_x$ depend on the choice of $m$, but the space $\mathcal{H}$ does not. See for instance \cite[Section 2]{HRT}. The essential supremum $m$-$\esssup_{x\in X}\dim\mathcal{H}_x$ of the fiber dimensions $\dim \mathcal{H}_x$ is referred to as the \emph{martingale dimension} or \emph{index} of $(\mathcal{E},\mathcal{F})$. It is independent of the choice of $m$, \cite{Hino08, Hino10}.

\begin{examples}
In the Riemannian situation of Example \ref{Ex:rmf} we have  
$\mathcal{H}_x=T_x^\ast M$, and $\mathcal{H}$ is the space $L^2(M, T^\ast M, dvol)$ of $L^2$-differential $1$-forms on $M$. The operator $\partial_{0}$ coincides with the exterior derivation $d_{0}$ (in $L_2$-sense) taking functions into $1$-forms. If the manifold is $n$-dimensional, the martingale dimension of the Dirichlet form in \ref{Ex:rmf} is $n$. In particular, the $L^2$-space of $(n+1)$-forms is trivial, $L^2(M, \Lambda^{n+1}T^\ast M, dvol)=\left\lbrace 0\right\rbrace$.
\end{examples}

\begin{remark}
Hino showed in \cite[Theorem 4.4]{Hino08}, \cite[Theorem 4.5]{Hino10} and \cite[Theorem 4.10]{Hino13} that the martingale dimension for generic diffusions on p.c.f. self-similar fractals equals one. In \cite[Theorem 4.16]{Hino13} he showed that for generic diffusions on self-similar (generalized) Sierpinski carpets the martingale dimension is bounded from above by the spectral dimension.
\end{remark}

On a manifold the classical exterior derivation $d_1$ takes a $1$-forms $fd_0g$ into the $2$-forms $d_1(fd_0g)=d_0f\wedge d_0g$. In the present situation the space $\mathcal{F}\otimes \mathcal{A}$, spanned by elements of form $f\partial_0 g$ with $f\in\mathcal{A}$ and $g\in\mathcal{F}$, is a dense subspace of $\mathcal{H}$. Usual Hilbert space definitions lead to the \emph{space $L^2(X,(\hat{\Lambda}^2\mathcal{H}_x)_{x\in X}, m)$ of generalized $L^2$-differential forms of order $2$ associated with $(\mathcal{E},\mathcal{F})$ and $m$}, and we can introduce a generalization
\begin{equation}\label{E:welldef}
\partial_1: \mathcal{F}\otimes\mathcal{A}\to L^2(X,(\hat{\Lambda}^2\mathcal{H}_x)_{x\in X}, m)
\end{equation}
of the exterior derivation $d_1$ such that $\partial_1(f\partial_0 g)=\partial_0f\wedge \partial_0 g$. 

\begin{examples}\label{Ex:inidom}
In the Riemannian case $L^2(X,(\hat{\Lambda}^2\mathcal{H}_x)_{x\in X}, m)$ is $L^2(M, \Lambda^2 T^\ast M, dvol)$ and $\partial_1$ agrees with $d_1$.
\end{examples}

We make an additional assumption on the compact metric space $X$. Given a compact set $F\subset X$, we say that a function is locally constant quasi everywhere on an open cover of $F$ if it is constant quasi everywhere on the union of all sets in the cover. We write $\mathcal{S}^F$ for the space of all $f\in \mathcal{F}\cap L^\infty(X,m)$ for which there exists a finite open cover of $F$ such that the quasi-continuous version $\widetilde{f}$ of $f$ is locally constant quasi everywhere on this cover.

\begin{assumption}\label{A:carpetlike}
There is a topological base $\mathcal{O}$ for $X$, stable under taking finite unions, such that
\begin{enumerate}
\item[(i)] for any finite collection of pairwise disjoint base sets $O_1,...,O_M\in \mathcal{O}$ and any $g\in\mathcal{F}$ we can find a sequence of functions $(g_k)_k\subset \mathcal{S}^{\bigcup_{i=1}^M \partial O_i}$ such that $\lim_k\mathcal{E}(g_k-g)=0$,
\item[(ii)] For any $\delta>0$ there exist a finite collection of pairwise disjoint base sets $O_1,...,O_M \in\mathcal{O}$ such that $\diam (O_i)<\delta$, $i=1,...,M$ and $\bigcup_{i=1}^M \overline{O_i}=X$.
\end{enumerate}
\end{assumption}

\begin{theorem}\label{T:general}
Suppose that Assumptions \ref{A:Adense} and \ref{A:carpetlike} are satisfied. Then either the martingale dimension of $(\mathcal{E},\mathcal{F})$ is one or the derivation $\partial_{1}$ in (\ref{E:welldef}) is not closable. 
\end{theorem}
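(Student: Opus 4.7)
The strategy mirrors the proof of Theorem~\ref{thm01}: assume for contradiction that $\partial_1$ is closable and construct a sequence $v_n \in \mathcal{F}\otimes\mathcal{A}$ with $v_n \to 0$ in $\mathcal{H}$ and $\partial_1 v_n$ converging to a nonzero $2$-form. If the martingale dimension of $(\mathcal{E},\mathcal{F})$ is at least $2$, then on a set of positive $m$-measure the fiber $\mathcal{H}_x$ has dimension at least $2$; combining this with the density of $\{(\partial_0 f)_x : f\in\mathcal{A}\}$ in $\mathcal{H}_x$ for $m$-a.e.\ $x$, one can choose $g,h \in \mathcal{A}$ such that
\[\omega \;:=\; \partial_0 h \wedge \partial_0 g\]
is a nonzero element of $L^2(X,(\hat\Lambda^2\mathcal{H}_x)_{x\in X},m)$. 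This $\omega$ is the intended limit of $\partial_1 v_n$.

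For each $n$, apply Assumption~\ref{A:carpetlike}(ii) to produce a pairwise disjoint cover $O_1^n,\ldots,O_{M_n}^n$ of $X$ with $\diam(O_i^n)<1/n$ and $\bigcup_i\overline{O_i^n}=X$, and set $F_n := \bigcup_i \partial O_i^n$. Use Assumption~\ref{A:carpetlike}(i) to obtain $g_n \in \mathcal{S}^{F_n}$ with $\mathcal{E}(g_n-g) \to 0$; the corresponding open cover of $F_n$ has union $W_n$ on which $g_n$ is locally constant, so strong locality of $\mathcal{E}$ forces $\partial_0 g_n \equiv 0$ on $W_n$. Pick $x_i^n \in O_i^n$, set $c_i^n := h(x_i^n)$, and use the special standard core property applied to the pairwise disjoint compact cores $K_i^n := \overline{O_i^n}\setminus W_n$ (which lie in $O_i^n$ because $\partial O_i^n \subset F_n \subset W_n$) and the open sets $O_i^n \cup W_n$ to construct a partition of unity $\{\chi_i^n\}_i \subset \mathcal{A}$ with $\chi_i^n = 1$ on $K_i^n$, $\chi_i^n = 0$ off $O_i^n \cup W_n$, and $\sum_i \chi_i^n = 1$. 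Each $\chi_i^n$ is then constant on every cell core (equal to $1$ or $0$), so $\partial_0\chi_i^n$ is supported in $W_n$. Define
\[h_n := h - \sum_{i=1}^{M_n} c_i^n \chi_i^n \in \mathcal{A}, \qquad v_n := h_n\,\partial_0 g_n \in \mathcal{F}\otimes\mathcal{A}.\]

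Two estimates close the argument. Uniform continuity of $h$ combined with $\diam(O_i^n)<1/n$ and $\sum_i \chi_i^n = 1$ gives $\|h_n\|_\infty \to 0$: on a cell core $h_n = h - c_i^n$ is bounded by the modulus $\omega_h(1/n)$, while on $W_n$ the partition of unity averages the nearby values $c_j^n$, each within $\omega_h(C/n)$ of $h$. Hence
\[\|v_n\|_{\mathcal{H}}^2 \;=\; \int_X h_n^2 \, d\Gamma(g_n) \;\leq\; \|h_n\|_\infty^2 \, \mathcal{E}(g_n) \;\longrightarrow\; 0.\]
For the image, since $\partial_0 \chi_i^n$ is supported in $W_n$ and $\partial_0 g_n = 0$ there, each wedge $\partial_0\chi_i^n\wedge\partial_0 g_n$ vanishes $m$-a.e., so $\partial_1 v_n = \partial_0 h \wedge \partial_0 g_n$. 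Assumption~\ref{A:Adense} yields $\Gamma(h) \in L^\infty(X,m)$, so the linear map $\eta \mapsto \partial_0 h \wedge \eta$ is bounded from $\mathcal{H}$ into $L^2(X,(\hat\Lambda^2\mathcal{H}_x)_{x\in X},m)$, and the strong convergence $\partial_0 g_n \to \partial_0 g$ in $\mathcal{H}$ yields $\partial_1 v_n \to \omega \neq 0$, contradicting closability.

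The principal technical obstacle is the construction of the partition of unity $\{\chi_i^n\}$ inside $\mathcal{A}$ whose non-constancy is confined to the prescribed set $W_n$: once individual cutoffs supplied by the special standard core are obtained for the pairs $(K_i^n, O_i^n \cup W_n)$, a Lipschitz normalization compatible with the closure properties of $\mathcal{A}$ produces the required partition, and the geometric inclusion $K_i^n \subset O_i^n$ guarantees that each $\chi_i^n$ is constant on every cell core and therefore has derivative supported entirely in $W_n$.
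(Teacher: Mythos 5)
Your proposal follows the paper's own proof essentially step for step: assume the martingale dimension is at least two, pick $g,h\in\mathcal{A}$ with $\partial_0 h\wedge\partial_0 g\neq 0$, use Assumption \ref{A:carpetlike} to get fine disjoint cells $O_i^{(n)}$ and functions $g_n\in\mathcal{S}^{\bigcup_i\partial O_i^{(n)}}$ with $\mathcal{E}(g-g_n)\to 0$, replace $h$ by a uniformly small $h_n$ that differs from $h$ by cell-wise constants outside the neighborhood $W_n$ of the boundaries, and conclude $h_n\partial_0 g_n\to 0$ in $\mathcal{H}$ while $\partial_1(h_n\partial_0 g_n)=\partial_0 h\wedge\partial_0 g_n\to\partial_0 h\wedge\partial_0 g\neq 0$. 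The only difference is cosmetic: the paper uses a single cutoff $\varphi_n\in\mathcal{A}$ with $\varphi_n\equiv 1$ on $\bigcup_i\partial O_i^{(n)}$, supported in $W^{(n)}$, and defines $f_n$ cell by cell, whereas you build a partition of unity $\{\chi_i^n\}$ subordinate to the cells.

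That partition-of-unity step is the one place where your write-up is not justified as stated, though the flaw is inessential. First, normalizing raw cutoffs to achieve $\sum_i\chi_i^n\equiv 1$ requires the sum of the unnormalized cutoffs to be bounded away from zero on all of $X$, which is automatic on $X\setminus W_n=\bigcup_i K_i^n$ but not on $W_n$; moreover the "Lipschitz normalization" (division) need not stay inside $\mathcal{A}$ under Assumption \ref{A:Adense} alone, only inside $\mathcal{F}$. Second, your sup-norm estimate on $W_n$ ("the partition averages the nearby values $c_j^n$") does not follow from the stated support condition $\chi_j^n=0$ off $O_j^n\cup W_n$, since $W_n$ is a global neighborhood of all cell boundaries and a point of $W_n$ may carry weight from cells far away. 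Fortunately none of this is needed: since $g_n$ is locally constant q.e.\ on $W_n$, strong locality gives $\Gamma(g_n)=0$ $m$-a.e.\ on $W_n$, so
\[
\left\|h_n\partial_0 g_n\right\|_{\mathcal{H}}^2=\int_{X\setminus W_n}h_n^2\,\Gamma(g_n)\,dm\leq\Bigl(\sup_{X\setminus W_n}|h_n|\Bigr)^2\,\mathcal{E}(g_n),
\]
and on $X\setminus W_n$ you have $h_n=h-h(x_i^{(n)})$ on each cell core $K_i^n$, which the diameter bound already controls. So you may drop the requirement $\sum_i\chi_i^n=1$ entirely and work with the unnormalized core cutoffs ($\chi_i^n=1$ on $K_i^n$, vanishing off $O_i^n\cup W_n$); the identity $\partial_1 v_n=\partial_0 h\wedge\partial_0 g_n$ in $L^2(X,(\hat{\Lambda}^2\mathcal{H}_x)_{x\in X},m)$ survives because $\Gamma(\chi_i^n)=0$ $m$-a.e.\ off $W_n$ and $\Gamma(g_n)=0$ $m$-a.e.\ on $W_n$, exactly as in your argument and in the paper's. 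With that simplification your proof is correct and coincides with the paper's.
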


\begin{remark}\label{R:connect}\mbox{}
\begin{enumerate}
\item[(i)] In some sense the abstract conditions in Assumption \ref{A:carpetlike} require $X$ to be structured like a Sierpinski carpet. In particular, they are valid for the generalized Sierpinski carpets $S$ satisfying (\ref{E:condan}) in Section \ref{S:sc}: By enscribing a diagonal in each set $S_{n,k}$ we can triangulate $S$. Choosing $n$ larger and larger, we can approximate a given $C^1(\mathbb{R}^2)$-function on $S$ in energy by functions that are linear on these triangles. Given such a piecewise linear function $g$ we can then cover $S$ by $M=M(n')$ sets $\overline{O_i}=S_{n',k}$ and apply a slight modification of the proof of Theorem \ref{thm01} to see that if $n'$ is chosen large enough, we can find a function $\widetilde{g}\in\mathcal{S}^{\bigcup_{i=1}^M \partial O_i}$ that is arbitrarily close to $g$ in energy. 
\item[(ii)] For any compact topologically one-dimensional metric space $X$ that satisfies Assumption \ref{A:carpetlike} the locally harmonic $1$-forms are dense in the orthogonal complement $\mathcal{H}^1(X)$ of $Im\:\partial$ in $\mathcal{H}$, see \cite[Theorem 4.2]{HT}. According to (i) above, this result holds in particular for the generalized Sierpinski carpets $S$ satisfying (\ref{E:condan}). However, one would expect this result to be true only if there are no nontrivial $2$-forms, i.e. if $L^2(X,(\hat{\Lambda}^2\mathcal{H}_x)_{x\in X}, m)=\left\lbrace 0\right\rbrace$, which is not the case for these carpets. If $\partial_1$ were closable, this would produce a contradiction. Theorem \ref{T:general} excludes this possibility.
\end{enumerate}
\end{remark}

Before giving a proof of Theorem \ref{T:general} in Section \ref{S:proof} we provide details for the operator in (\ref{E:welldef}). 

\section{Algebraic definitions, energy norms and wedge products}\label{S:alg}

We consider some standard items of the theory of universal graded differential algebras, see \cite{Ei99, GVF}. By $\overline{\mathcal{A}}:=\mathcal{A}/\mathbb{R}$ we denote the algebra obtained from $\mathcal{A}$ by factoring out constants, and we will make use of a similar notation for other spaces. Notationally we do not distinguish between an element and its class. We consider the space $\overline{\mathcal{A}}\otimes\mathcal{A}$ spanned by tensors $f\otimes g$, by $(f\otimes g)(x,y)=f(x)g(y)$ they may be viewed as elements of $C(X\times X)$. In order to follow the notation of \cite{CS03, CS09, HT, IRT}, the notation used here deviates slightly from the one in \cite[Section 8.1]{GVF}, but structurally the definitions are the same. Right and left actions of $\mathcal{A}$ on $\overline{\mathcal{A}}\otimes\mathcal{A}$ can be defined as the linear extensions of 
\begin{equation}\label{E:actions}
(f\otimes g)h:=f\otimes (gh)\ \text{ and }\ h(f\otimes g):=(fh)\otimes g-h\otimes (fg).
\end{equation}
The definition
\begin{equation}\label{E:derivation}
\partial_0 f:=f\otimes \mathbf{1},\ \ f\in\mathcal{A},
\end{equation}
yields a linear operator $\partial_0:\mathcal{A}\to\overline{\mathcal{A}}\otimes\mathcal{A}$ that satisfies a \emph{product rule},
\begin{equation}\label{E:Leibniz}
\partial_0(fg)=f\partial_0 g +(\partial_0 f)g,\ \ f,g\in\mathcal{A}.
\end{equation}
The space $\overline{\mathcal{A}}\otimes\overline{\mathcal{A}}\otimes\mathcal{A}$ can be equipped with right and left actions of $\mathcal{A}$ by linearly extending
\begin{equation}\label{E:actions2r}
(f\otimes g\otimes h)u:=f\otimes g\otimes (hu)
\end{equation}
and
\begin{equation}\label{E:actions2l}
u(f\otimes g\otimes h):=u\otimes f\otimes (gh)-u\otimes (fg)\otimes h + (fu)\otimes g\otimes h.
\end{equation}
A product of two elements of $\overline{\mathcal{A}}\otimes\mathcal{A}$ is defined by
\begin{equation}\label{E:product}
(f\otimes g)(u\otimes v):=f\otimes (g(u\otimes v)).
\end{equation}
Similarly as in (\ref{E:derivation})
\begin{equation}\label{E:partial1}
\partial_1(f\otimes g):=f\otimes g\otimes \mathbf{1}
\end{equation}
defines a linear operator $\partial_1: \overline{\mathcal{A}}\otimes\mathcal{A}\to\overline{\mathcal{A}}\otimes
\overline{\mathcal{A}}\otimes\mathcal{A}$. We observe the identity $\partial_1\partial_0=0$.

\begin{remark}
One can also deduce a \emph{graded product rule} that yields
\begin{equation}\label{E:gradprod}
\partial_1(h\omega)=h\partial_1\omega-(\partial_0 h)\omega\ \ \text{ and}\ \ \partial_1(\omega h)=(\partial_1\omega)h+\omega\partial_0 h
\end{equation}
for tensors of form $\omega=f\otimes g$ and functions $h\in\mathcal{A}$. Formula (\ref{E:gradprod})  follows from  (\ref{E:actions}), (\ref{E:actions2r}), (\ref{E:actions2l}) and (\ref{E:product}).
\end{remark}

Suitable norms connect this algebraic point of view to the structure of $(\mathcal{E},\mathcal{F})$. On $\overline{\mathcal{A}}\otimes\mathcal{A}$ we consider the symmetric bilinear form given by the extension of
\begin{equation}\label{E:Hnorm}
\left\langle f_1\otimes g_1, f_2\otimes g_2\right\rangle_{\mathcal{H}}=\int_X g_1g_2\Gamma(f_1, f_2)\:dm.
\end{equation}
This form is nonnegative definite and induces a Hilbert seminorm $\left\|\cdot\right\|_{\mathcal{H}}$. Factoring $\overline{\mathcal{A}}\otimes\mathcal{A}$ by the kernel of this seminorm and completing yields a Hilbert space $(\mathcal{H},\left\|\cdot\right\|_{\mathcal{H}})$, referred to as the \emph{space of generalized $L^2$-differential $1$-forms} associated with $(\mathcal{E},\mathcal{F})$. Similarly as in \cite[Section 7]{HRT} one can verify the following statement.

\begin{lemma}\label{L:dense}
Under Assumption \ref{A:Adense} the space $\overline{\mathcal{C}}\otimes\mathcal{C}$ is a subspace of $\mathcal{H}$, and $\overline{\mathcal{A}}\otimes\mathcal{A}$ is dense in $\overline{\mathcal{C}}\otimes\mathcal{C}$ with respect to $\left\|\cdot\right\|_{\mathcal{H}}$.
\end{lemma}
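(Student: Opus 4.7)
The plan is to first make sense of the formula (\ref{E:Hnorm}) on the larger tensor product $\overline{\mathcal{C}}\otimes\mathcal{C}$, where $\mathcal{C}$ stands for $\mathcal{F}\cap C(X)$. For $f\in\mathcal{F}\cap C(X)$ the energy measure $\Gamma(f)$ is a finite Radon measure with total mass $\mathcal{E}(f)$, and for $g\in\mathcal{F}\cap C(X)$ the function $g$ is bounded. Therefore
\[
\|f\otimes g\|_{\mathcal{H}}^2 \;=\; \int_X g^2\, d\Gamma(f) \;\leq\; \|g\|_\infty^2\,\mathcal{E}(f) \;<\; \infty,
\]
and polarization together with Cauchy-Schwarz shows that (\ref{E:Hnorm}) extends to a well-defined, nonnegative definite symmetric bilinear form on $\overline{\mathcal{C}}\otimes\mathcal{C}$. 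Factoring out the kernel of the induced seminorm and comparing with the analogous construction on $\overline{\mathcal{A}}\otimes\mathcal{A}$ identifies $\overline{\mathcal{C}}\otimes\mathcal{C}$, as a pre-Hilbert space, with a subspace of $\mathcal{H}$.

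For the density assertion, I would fix $f,g\in \mathcal{F}\cap C(X)$ and look for sequences $(f_n),(g_n)\subset\mathcal{A}$ satisfying (a) $\mathcal{E}_1(f_n-f)\to 0$ and $\mathcal{E}_1(g_n-g)\to 0$, (b) $\sup_n\|g_n\|_\infty <\infty$, and (c) $\|g_n-g\|_\infty\to 0$. Once such sequences are in hand, the triangle inequality gives
\[
\|f_n\otimes g_n - f\otimes g\|_{\mathcal{H}} \;\leq\; \|(f_n-f)\otimes g_n\|_{\mathcal{H}} + \|f\otimes(g_n-g)\|_{\mathcal{H}},
\]
whose first summand is bounded by $\|g_n\|_\infty\,\mathcal{E}(f_n-f)^{1/2}$ and whose second summand is bounded by $\|g_n-g\|_\infty\,\mathcal{E}(f)^{1/2}$. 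Both vanish in the limit, which gives the density.

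The remaining task is to produce sequences with properties (a), (b), (c). Property (a) is the standard density of the special standard core $\mathcal{A}$ in $\mathcal{F}$ with respect to $\mathcal{E}_1$. Properties (b) and (c) require more care: since $\mathcal{A}$ is closed under composition with Lipschitz functions fixing $0$, I can truncate an initial $\mathcal{E}_1$-approximating sequence $(g_n^{(0)})\subset\mathcal{A}$ of $g$ by applying a contraction of the form $t\mapsto ((t\wedge M)\vee(-M))$ with $M=\|g\|_\infty+1$, preserving $\mathcal{E}_1$-convergence (by the Markov property of the form) and securing uniform boundedness. To upgrade the convergence to uniform convergence on $X$, I would invoke the fact that $\mathcal{A}$ is uniformly dense in $C(X)$ and combine a separate uniform approximant of $g$ with the energy approximant via a diagonal argument, in the style of \cite[Section 7]{HRT}; alternatively, since $g$ is continuous on the compact space $X$, Stone-Weierstrass applied to the subalgebra $\mathcal{A}$ yields uniform approximants, and one can then add a small energy-vanishing correction to reconcile the two modes of convergence.

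The main obstacle is precisely this last point: obtaining a single sequence in $\mathcal{A}$ that simultaneously converges to $g$ in $\mathcal{E}_1$-norm and uniformly on $X$. In general, $\mathcal{E}_1$-convergence does not imply uniform convergence, so the diagonal argument must genuinely exploit both the Markov structure of $\mathcal{A}$ (to keep truncations inside the core without losing energy control) and the Stone-Weierstrass-type density in $C(X)$. Once this is achieved, the estimate above closes the argument and shows $\overline{\mathcal{A}}\otimes\mathcal{A}$ is dense in $\overline{\mathcal{C}}\otimes\mathcal{C}$ with respect to $\|\cdot\|_{\mathcal{H}}$.
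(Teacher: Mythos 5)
Your estimates are the right ones, and your overall structure (extend the form \eqref{E:Hnorm} to $\overline{\mathcal{C}}\otimes\mathcal{C}$ using finiteness of the energy measures and boundedness of the second slot, then approximate simple tensors and conclude by the triangle inequality) is the natural argument; the paper itself only refers to \cite[Section 7]{HRT} for this. But the step you single out as the main obstacle --- producing a single sequence in $\mathcal{A}$ that approximates $g$ simultaneously in $\mathcal{E}_1$-norm and uniformly --- is not needed, and as written your proof stops exactly there. The requirement $\mathcal{E}_1(g_n-g)\to 0$ in your condition (a) is never used in your own estimate: the seminorm \eqref{E:Hnorm} involves the energy density only of the \emph{first} tensor slot, while the second slot enters only through its bounded values. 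Indeed
\[
\left\|f_n\otimes g_n-f\otimes g\right\|_{\mathcal{H}}\leq \left\|g_n\right\|_{\sup}\,\mathcal{E}(f_n-f)^{1/2}+\left\|g_n-g\right\|_{\sup}\,\mathcal{E}(f)^{1/2},
\]
so all that is required is $\mathcal{E}(f_n-f)\to 0$ and $\left\|g_n-g\right\|_{\sup}\to 0$ (uniform boundedness of $\left\|g_n\right\|_{\sup}$ is then automatic since $g$ is bounded). These two approximations may be carried out by \emph{independent} sequences for the two slots: $\mathcal{A}$ is $\mathcal{E}_1$-dense in $\mathcal{F}$, giving the $f_n$, and $\mathcal{A}$ is by the very definition of a special standard core a uniformly dense subalgebra of $C(X)$, giving the $g_n$ --- no truncation, no Stone--Weierstrass argument and no diagonal procedure reconciling two modes of convergence is needed. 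With this correction your triangle-inequality computation closes the density argument for simple tensors, and general elements of $\overline{\mathcal{C}}\otimes\mathcal{C}$ are finite sums of such, handled again by the triangle inequality.

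Two smaller points. First, in the extension step note that the densities $\Gamma(f_1,f_2)$ exist for \emph{all} $f_1,f_2\in\mathcal{F}$ because $m$ is energy dominant for the whole of $(\mathcal{E},\mathcal{F})$ (Assumption \ref{A:Adense}), and nonnegative definiteness on finite sums follows from the $m$-a.e.\ nonnegative definiteness of the matrices $\bigl(\Gamma(f_i,f_j)(x)\bigr)_{i,j}$; your appeal to Cauchy--Schwarz and polarization is the right mechanism. Second, be careful with the logical order of the two assertions: factoring $\overline{\mathcal{C}}\otimes\mathcal{C}$ by the kernel of the seminorm a priori embeds it only into its own completion, which contains $\mathcal{H}$; it is precisely the density of $\overline{\mathcal{A}}\otimes\mathcal{A}$ just proved that identifies the two completions and hence realizes $\overline{\mathcal{C}}\otimes\mathcal{C}$ (modulo the kernel) isometrically as a subspace of $\mathcal{H}$. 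So the subspace claim should be presented as a consequence of the density claim, not before it.
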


Lemma \ref{L:dense} shows that indeed
$\mathcal{H}$ is the same Hilbert space of $1$-forms as defined by Cipriani and Sauvageot in \cite{CS03} and studied further by various authors, see for instance \cite{CGIS12, CS09, HKT, HRT, HT, IRT}. In particular, $\mathcal{H}$ does not depend on the choice of $m$.

By continuity the definitions (\ref{E:actions}) extend further to bounded linear actions of $\mathcal{A}$ on $\mathcal{H}$ with $\left\|\omega h\right\|_{\mathcal{H}}\leq \left\|h\right\|_{L^\infty(X,m)}\left\|\omega\right\|_{\mathcal{H}}$ and $\left\|h\omega\right\|_{\mathcal{H}}\leq \left\|h\right\|_{L^\infty(X,m)}\left\|\omega\right\|_{\mathcal{H}}$ 
for all $\omega\in\mathcal{H}$ and $h\in\mathcal{A}$. The strong locality of $(\mathcal{E},\mathcal{F})$ implies that the left and right actions of $\mathcal{A}$ on $\mathcal{H}$ agree.  For the derivation $\partial_0$ we have $\left\|\partial_0 f\right\|_{\mathcal{H}}^2=\mathcal{E}(f)$, $f\in\mathcal{A}$, and by the closedness of $(\mathcal{E},\mathcal{F})$ it extends to a closed unbounded linear operator $\partial_0: L^2(X,\mu)\to\mathcal{H}$ with domain $\mathcal{F}$. 

We endow the space $\overline{\mathcal{A}}\otimes\overline{\mathcal{A}}\otimes\mathcal{A}$ with the symmetric bilinear form defined by the bilinear extension of
\begin{equation}\label{E:H2norm}
\left\langle f_1\otimes g_1\otimes h_1, f_2\otimes g_2\otimes h_2\right\rangle_{\mathcal{H}^{(2)}}:=\int_Xh_1h_2\left(\Gamma(f_1,f_2)
\Gamma(g_1,g_2)-\Gamma(f_1,g_2)\Gamma(g_1,f_2)\right)\:dm.
\end{equation}
By the Cauchy-Schwarz inequality for $\Gamma$ this bilinear form is nonnegative definite and therefore produces a Hilbert seminorm $\left\|\cdot\right\|_{\mathcal{H}^{(2)}}$. Factoring out zero seminorm elements and completing we obtain a Hilbert space $(\mathcal{H}^{(2)},\left\|\cdot\right\|_{\mathcal{H}^{(2)}})$. Actions (\ref{E:actions2r}) and (\ref{E:actions2l}) extend to actions of $\mathcal{A}$ on $\mathcal{H}^{(2)}$ satisfying
$\left\|\omega h\right\|_{\mathcal{H}^{(2)}}\leq \left\|h\right\|_{L^\infty(X,m)}\left\|\omega\right\|_{\mathcal{H}^{(2)}}$ and $\left\|h\omega\right\|_{\mathcal{H}^{(2)}}\leq \left\|h\right\|_{L^\infty(X,m)}\left\|\omega\right\|_{\mathcal{H}^{(2)}}$
for any $\omega\in\mathcal{H}^{(2)}$ and $h\in\mathcal{A}$, and again the strong locality of $(\mathcal{E},\mathcal{F})$ can be used to verify that the right and the left action agree. In $\mathcal{H}^{(2)}$ the elements $h(\partial_1(g\partial_0 f))$ and $f\otimes g\otimes h$ agree for any  $f,g\in \overline{\mathcal{A}}$ and $h\in \mathcal{A}$ and their span is dense.

Let $(\mathcal{H}_x)_{x\in X}$ be the measurable field of Hilbert spaces $(\mathcal{H}_x, \left\langle\cdot,\cdot\right\rangle_{\mathcal{H}_x})$ as discussed in (\ref{E:directint}). We consider exterior products of the fibers $\mathcal{H}_x$, see for instance \cite[Section V.1]{Temam97}. For fixed $x\in X$ the tensor product $\omega_x^1\otimes \eta_x^1$ of two elements
$\omega_x^1$ and $\eta_x^1$ of $\mathcal{H}_x$ is defined as the bilinear form
$(\omega_x^1\otimes \eta_x^1)(\omega_x^2, \eta_x^2):=\left\langle \omega_x^1, \omega_x^2\right\rangle_{\mathcal{H}_x}\left\langle \eta_x^1, \eta_x^2\right\rangle_{\mathcal{H}_x}$, $\omega_x^2, \eta_x^2\in\mathcal{H}_x$.
As usual, we will denote the span of all such tensor products by $\bigotimes^2 \mathcal{H}_x$ and write $\Lambda^2 \mathcal{H}_x$ for the subspace of $\bigotimes^2\mathcal{H}_x$ spanned by the elements of type 
$\omega_x\wedge \eta_x:=\omega_x\otimes \eta_x-\eta_x\otimes \omega_x$. 
We endow $\Lambda^2 \mathcal{H}_x$ with its standard scalar product, defined as the bilinear extension of
\[\left\langle \omega_x^1\wedge\eta_x^1, \omega_x^2\wedge\eta_x^2\right\rangle_{\hat{\Lambda}^2\mathcal{H}_x}:=
\left\langle\omega_x^1, \omega_x^2\right\rangle_{\mathcal{H}_x}\left\langle \eta_x^1, \eta_x^2\right\rangle_{\mathcal{H}_x}-\left\langle \omega_x^1,\eta_x^2\right\rangle_{\mathcal{H}_x}\left\langle \eta_x^1,\omega_x^2\right\rangle_{\mathcal{H}_x}.\]
Let $\hat{\Lambda}^2\mathcal{H}_x$ denote the completion of $\Lambda^2 \mathcal{H}_x$ in this scalar product. It is not difficult to see that $(\hat{\Lambda}^2\mathcal{H}_x)_{x\in X}$ is again a measurable field of Hilbert spaces on $X$, and we consider its direct integral 
$L^2(X,(\hat{\Lambda}^2\mathcal{H}_x)_{x\in X}, m)$, equipped with the natural scalar product
\[(\xi,\zeta)\mapsto \int_X\left\langle \xi_x,\zeta_x\right\rangle_{\hat{\Lambda}^2\mathcal{H}_x} m(dx).\]
We introduce exterior products in the $L_2$-sense.

\begin{lemma}
Let $f_1, f_2\in\mathcal{F}$ be functions such that at least one of them is a member of
$\mathcal{A}$ and let $g_1,g_2\in L^\infty(X,m)$. Then 
\[(g_1\partial_0 f_1)\wedge (g_2\partial_0 f_2):=((g_1(x)(\partial_0 f_1)_x)\wedge (g_2(x)(\partial_0f_2)_x))_{x\in X}\]
is a member of $L^2(X,(\hat{\Lambda}^2\mathcal{H}_x)_{x\in X}, m)$.
\end{lemma}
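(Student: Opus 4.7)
The plan is to reduce the claim to a pointwise norm estimate on the fibers $\hat{\Lambda}^2\mathcal{H}_x$ and then integrate, exploiting Assumption \ref{A:Adense} to absorb one factor in $L^\infty$ and using $f_i\in\mathcal{F}$ for the other.

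First I would observe that the map $x\mapsto (g_1(x)(\partial_0f_1)_x)\wedge (g_2(x)(\partial_0f_2)_x)$ is a measurable section of $(\hat{\Lambda}^2\mathcal{H}_x)_{x\in X}$. This is routine: the projections $\omega\mapsto \omega_x$ from $\mathcal{H}$ to $\mathcal{H}_x$ are measurable by the direct-integral structure in (\ref{E:directint}), the scalar multiplications by $g_i\in L^\infty(X,m)$ preserve measurability, and the fiberwise wedge operation $(\omega_x,\eta_x)\mapsto \omega_x\wedge \eta_x$ is a bounded bilinear map whose composition with measurable sections is measurable.

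Next I would use the defining identity of the scalar product on $\hat{\Lambda}^2\mathcal{H}_x$ together with the Cauchy–Schwarz inequality in $\mathcal{H}_x$ to get the pointwise bound
\[\|(g_1(x)(\partial_0f_1)_x)\wedge (g_2(x)(\partial_0f_2)_x)\|^2_{\hat{\Lambda}^2\mathcal{H}_x} \leq g_1(x)^2 g_2(x)^2\, \|(\partial_0f_1)_x\|_{\mathcal{H}_x}^2\, \|(\partial_0f_2)_x\|_{\mathcal{H}_x}^2.\]
By (\ref{E:SPfiber}) the fiber norms equal $\Gamma(f_i)(x)$ for $m$-a.e.\ $x$, so the right-hand side is $g_1(x)^2 g_2(x)^2 \Gamma(f_1)(x)\Gamma(f_2)(x)$.

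Finally I would integrate against $m$. Assume without loss of generality that $f_1\in\mathcal{A}$. Then by (\ref{E:boundeddensities}) in Assumption \ref{A:Adense} the density $\Gamma(f_1)$ lies in $L^\infty(X,m)$, while $g_1,g_2$ lie in $L^\infty(X,m)$ by hypothesis and $\Gamma(f_2)\in L^1(X,m)$ since $\int_X\Gamma(f_2)\,dm = \mathcal{E}(f_2) < \infty$. Combining these facts yields
\[\int_X g_1^2 g_2^2\, \Gamma(f_1)\Gamma(f_2)\,dm \leq \|g_1\|_{L^\infty}^2\|g_2\|_{L^\infty}^2\,\|\Gamma(f_1)\|_{L^\infty}\,\mathcal{E}(f_2) < \infty,\]
which shows the section is square integrable and hence a member of $L^2(X,(\hat{\Lambda}^2\mathcal{H}_x)_{x\in X},m)$.

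There is no serious obstacle here; the only point that requires attention is the asymmetry in the hypothesis. If \emph{both} $f_i$ were merely in $\mathcal{F}$, the two energy densities would only be in $L^1$ and the product $\Gamma(f_1)\Gamma(f_2)$ need not be integrable. The assumption that at least one of $f_1,f_2$ belongs to $\mathcal{A}$ is precisely what allows the bounded density to be pulled out in $L^\infty$, and this is the step where Assumption \ref{A:Adense} is essential.
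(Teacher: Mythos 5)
Your proof is correct and follows essentially the same route as the paper: Cauchy--Schwarz in the fibers $\mathcal{H}_x$, the identity (\ref{E:SPfiber}) to express the fiber norms via energy densities, and then pulling out $\left\|g_1\right\|_{L^\infty}$, $\left\|g_2\right\|_{L^\infty}$ and $\left\|\Gamma(f_1)\right\|_{L^\infty}$ while integrating $\Gamma(f_2)$ to $\mathcal{E}(f_2)$. Your explicit remark on measurability of the section and your closing observation about why one of the $f_i$ must lie in $\mathcal{A}$ are correct but not needed beyond what the paper's argument already contains.
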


\begin{proof} Suppose $f_1\in\mathcal{A}$. Formula (\ref{E:SPfiber}) and Cauchy-Schwarz applied to the fibers $\mathcal{H}_x$ yield 
\begin{align}
&
\left\|(g_1\partial_0 f_1)\wedge (g_2\partial_0 f_2)\right\|_{L^2(X,(\hat{\Lambda}^2\mathcal{H}_x)_{x\in X}, m)}^2
\\&
\leq 4\int_X\left\|(g_1(x)(\partial_0f_1)_x\right\|_{\mathcal{H}_x}^2\left\|g_2(x)(\partial_0 f_2)_x\right\|_{\mathcal{H}_x}^2 m(dx)\notag
\\&
=4\left\|g_1\right\|_{L^\infty(X,m)}^2\left\|\Gamma(f_1)\right\|_{L^\infty(X,m)}\left\|g_2\right\|_{L^\infty(X,m)}^2\mathcal{E}(f_2)\notag
<+\infty.\notag
\end{align}
\end{proof}

\begin{remark}
We observe that by construction $g_1\partial_0 f_1\wedge g_2\partial_0 f_2=g_1g_2\partial_0 f_1\wedge \partial_0 f_2$ and $g_1\partial_0 f_1\wedge g_2\partial_0 f_2=-g_2\partial_0 f_2\wedge g_1\partial_0 f_1$.
\end{remark}

Given $f\in\mathcal{F}$ and $ g\in\mathcal{A}$ we define
\begin{equation}\label{E:extendpartial}
\widetilde{\partial_1}(g\partial_0 f):=\partial_0g\wedge \partial_0f 
\end{equation}
and consider $\widetilde{\partial_1}$ as a densely defined unbounded operator from $\widetilde{\partial_1}:L^2(X,(\mathcal{H}_x)_{x\in X}, m)$ into $L^2(X,(\hat{\Lambda}^2\mathcal{H}_x)_{x\in X}, m)$
with initial domain $\mathcal{F}\otimes\mathcal{A}$. Note that we have $\overline{\mathcal{A}}\otimes\mathcal{A}\subset \mathcal{F}\otimes \mathcal{A}$.

This construction agrees with the preceding tensor product construction, and
in particular, $\widetilde{\partial_1}$ as in (\ref{E:extendpartial}) may be seen as an extension of $\partial_1$ as defined in (\ref{E:partial1}). To see this, define a linear map $\iota: \overline{\mathcal{A}}\otimes \overline{\mathcal{A}}\otimes\mathcal{A}\to L^2(X,(\hat{\Lambda}^2\mathcal{H}_x)_{x\in X}, m)$ by extending
\[\iota(h\partial_1(g\partial_0 f)):=h\partial_0 g\wedge \partial_0 f.\]
The following is a a consequence of (\ref{E:SPfiber}) and (\ref{E:H2norm}). 

\begin{corollary}
The map $\iota$ extends to an isometric isomorphism from the space $\mathcal{H}^{(2)}$ onto the space $L^2(X,(\hat{\Lambda}^2\mathcal{H}_x)_{x\in X}, m)$, and we have $\iota\circ \partial_1 = \widetilde{\partial_1}$ on $\overline{\mathcal{A}}\otimes\mathcal{A}$.
\end{corollary}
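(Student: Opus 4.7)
My plan is to verify the claim on the spanning family $\{h\partial_1(g\partial_0 f)=f\otimes g\otimes h : f,g\in\overline{\mathcal{A}},\,h\in\mathcal{A}\}$, whose linear span is dense in $\mathcal{H}^{(2)}$ by the statement in the paragraph preceding the lemma, and then extend by continuity. First I would check that $\iota$ is an isometry on this span with respect to the two Hilbert seminorms; this forces the kernel of the seminorm on the algebraic side to coincide with the kernel of the map, so $\iota$ descends to the Hilbert-space quotient and admits a unique continuous extension to an isometric embedding $\iota:\mathcal{H}^{(2)}\hookrightarrow L^2(X,(\hat{\Lambda}^2\mathcal{H}_x)_{x\in X},m)$.

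The isometry itself is an elementary calculation. Definition (\ref{E:H2norm}) expresses the $\mathcal{H}^{(2)}$ inner product of two generators $f_i\otimes g_i\otimes h_i$ as the integral against $h_1h_2\,dm$ of the Gram-determinant-type expression $\Gamma(f_1,f_2)\Gamma(g_1,g_2)-\Gamma(f_1,g_2)\Gamma(g_1,f_2)$. On the image side, the defining scalar product on $\hat{\Lambda}^2\mathcal{H}_x$ combined with (\ref{E:SPfiber}) evaluates the fiberwise inner product of $(\partial_0 g_1)_x\wedge(\partial_0 f_1)_x$ and $(\partial_0 g_2)_x\wedge(\partial_0 f_2)_x$ as exactly the same determinant. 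The two expressions agree after multiplication by $h_1h_2$ and integration against $m$.

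The main obstacle I expect is \emph{surjectivity}, i.e. density of the image. I would argue as follows. By construction of the direct integral, $\{(\partial_0 f)_x : f\in\mathcal{A}\}$ is total in $\mathcal{H}_x$ for $m$-a.e. $x$, so bilinearity and continuity of the wedge map imply that the simple wedges $(\partial_0 g)_x\wedge(\partial_0 f)_x$ with $f,g\in\mathcal{A}$ are fiberwise total in $\hat{\Lambda}^2\mathcal{H}_x$. Any section $\xi\in L^2(X,(\hat{\Lambda}^2\mathcal{H}_x)_{x\in X},m)$ can be approximated in norm by measurable simple sections of the form $\sum_i \mathbf{1}_{A_i}\xi^{(i)}$; approximating the characteristic functions $\mathbf{1}_{A_i}$ in $L^2(X,m)$ by $\mathcal{A}$-functions $h_i$ (possible because $\mathcal{A}$ is dense in $C(X)$, hence in $L^2(X,m)$) and approximating each fiber value $\xi^{(i)}$ by a finite combination of simple wedges $\partial_0 g\wedge \partial_0 f$ with $f,g\in\mathcal{A}$ shows that the image of $\iota$ is dense. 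Being the isometric image of a complete space, it is closed, and hence equal to all of $L^2(X,(\hat{\Lambda}^2\mathcal{H}_x)_{x\in X},m)$.

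Finally, the intertwining $\iota\circ\partial_1=\widetilde{\partial_1}$ on $\overline{\mathcal{A}}\otimes\mathcal{A}$ needs only to be verified on a generator $g\partial_0 f$: by (\ref{E:partial1}) we have $\partial_1(g\partial_0 f)=f\otimes g\otimes\mathbf{1}$, and the defining formula for $\iota$ together with (\ref{E:extendpartial}) gives $\iota(f\otimes g\otimes\mathbf{1})=\partial_0 g\wedge\partial_0 f=\widetilde{\partial_1}(g\partial_0 f)$, as desired.
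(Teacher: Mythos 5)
Your proposal is correct and takes essentially the route the paper intends: the paper presents the corollary as an immediate consequence of (\ref{E:SPfiber}) and (\ref{E:H2norm}), which is exactly your isometry computation on the spanning elements $f\otimes g\otimes h=h\,\partial_1(g\,\partial_0 f)$ versus $h\,\partial_0 g\wedge\partial_0 f$. The extra details you supply — descending to the quotient, surjectivity via density of sections $\sum_i h_i\,\partial_0 g_i\wedge\partial_0 f_i$ in the direct integral (using that the measurable field $(\hat{\Lambda}^2\mathcal{H}_x)_x$ is generated by wedges of $\partial_0$-images of $\mathcal{A}$, the $L^\infty$ bounds from Assumption \ref{A:Adense}, and density of $\mathcal{A}$ in $L^2(X,m)$), and the verification of $\iota\circ\partial_1=\widetilde{\partial_1}$ on generators — are precisely what the paper leaves implicit.
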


To $L^2(X,(\hat{\Lambda}^2\mathcal{H}_x)_{x\in X}, m)$ we refer as the \emph{space of generalized $L^2$-differential $2$-forms} associated with $(\mathcal{E},\mathcal{F})$ and $m$. We denote $\widetilde{\partial_1}$ again by $\partial_1$ and consider $\partial_1$ as an unbounded linear operator from $\mathcal{H}$ into $L^2(X,(\hat{\Lambda}^2\mathcal{H}_x)_{x\in X}, m)$
with dense initial domain $\mathcal{F}\otimes\mathcal{A}$.

\section{Proof of Theorem \ref{T:general}}\label{S:proof}

We proceed to a proof of Theorem \ref{T:general}. As no confusion can occur in this section we now write $\partial$ for both $\partial_0$ and $\partial_1$. Assume there is a non-zero element $\partial f\wedge \partial g$ with certain $f,g\in \mathcal{A}$. We construct a sequence of $1$-forms $(\omega_n)_n\subset\mathcal{H}$ that converges to zero in $\mathcal{H}$ but is such that the sequence $(\partial \omega_n)_n$ of its images under $\partial$ approximates $\partial f\wedge \partial g$ in $L^2(X,(\hat{\Lambda}^2\mathcal{H}_x)_{x\in X},m)$. This implies Theorem \ref{T:general}.

By compactness for any $n$ there exists some $\delta_n>0$ such that $|f(x)-f(y)|<\frac1n$ for all $x,y\in X$ with $\varrho(x,y)<\delta_n$. Let $O_1^{(n)}, ..., O_{M_n}^{(n)}$ be a finite collection of pairwise disjoint open sets as in Assumption \ref{A:carpetlike} such that $\diam O_i^{(n)}<\delta_n$, $\bigcup_{i=1}^{M_n} \overline{O_i^{(n)}}=X$ and let $g_n\in\mathcal{S}^{\bigcup_{i=1}^{M_n} \partial O_i^{(n)}}$ be functions 
satisfying
\begin{equation}\label{E:energyboundgn}
\mathcal{E}(g-g_n)<2^{-n}.
\end{equation} 
By $W^{(n)}$ we denote the union of a sufficiently small finite open cover of $\bigcup_{i=1}^{M_n} \partial O_i^{(n)}$ on which $g_n$ is locally constant quasi everywhere. Let $\varphi_n\in \mathcal{A}$ be a function  supported in $W^{(n)}$ such that $0\leq \varphi_n\leq 1$ and $\varphi_n\equiv 1$ on $\bigcup_{i=1}^{M_n} \partial O_i^{(n)}$. For any $i=1,..., M_n$ fix a point $x_i^{(n)}\in O_i^{(n)}$. We define functions $f_n\in\mathcal{A}$ by
\[f_n(x):=\sum_{i=1}^{M_n} (1-\varphi_n(x))(f(x)-f(x_i)), \ \ x\in X.\]
They satisfy 
\begin{equation}\label{E:supboundfn}
\left\|f_n\right\|_{\sup}<\frac1n
\end{equation}
and $f(x)-f_n(x)=f(x_i)$ for all $x\in O_i^{(n)}\setminus W^{(n)}$. We define the $1$-forms
$\omega_n:=f_n\partial g_n$.

\begin{lemma} The sequence $(\omega_n)_n$ converges to zero in $\mathcal{H}$, and
the sequence $(\partial \omega_n)_n$ converges to $\partial f\wedge \partial g$ in $L^2(X,(\hat{\Lambda}^2\mathcal{H}_x)_{x\in X}, m)$.
\end{lemma}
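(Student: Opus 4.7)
For the first claim, I would use the contractive left action estimate $\|h\omega\|_\mathcal{H} \leq \|h\|_{L^\infty(X,m)}\|\omega\|_\mathcal{H}$ with $h = f_n$ and $\omega = \partial g_n$, yielding $\|\omega_n\|_\mathcal{H} \leq \|f_n\|_{L^\infty(X,m)}\,\mathcal{E}(g_n)^{1/2}$. The first factor is smaller than $1/n$ by (\ref{E:supboundfn}), and the second is uniformly bounded in $n$ because $\mathcal{E}(g_n - g) < 2^{-n}$ by (\ref{E:energyboundgn}); the product therefore vanishes.

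For the second claim, formula (\ref{E:extendpartial}) gives $\partial \omega_n = \partial f_n \wedge \partial g_n$, and I would split
\[\partial f_n \wedge \partial g_n - \partial f \wedge \partial g = \partial(f_n - f) \wedge \partial g_n + \partial f \wedge (\partial g_n - \partial g).\]
The second summand is straightforward: the fiberwise inequality $\|\xi_x \wedge \eta_x\|_{\hat\Lambda^2\mathcal{H}_x}^2 \leq \|\xi_x\|_{\mathcal{H}_x}^2\|\eta_x\|_{\mathcal{H}_x}^2$ combined with the essential bound $\Gamma(f) \in L^\infty(X,m)$ from Assumption \ref{A:Adense} and $\mathcal{E}(g_n - g) < 2^{-n}$ forces this term to zero. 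The crux of the lemma is to show that the first summand is \emph{identically} zero in $L^2$, not merely small.

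This rests on a support-separation argument. Because $g_n \in \mathcal{S}^{\bigcup_i \partial O_i^{(n)}}$ and the cover witnessing this is contained in $W^{(n)}$, strong locality of $(\mathcal{E},\mathcal{F})$ yields $\Gamma(g_n)(W^{(n)}) = 0$, so the density $\Gamma(g_n)(x)$ vanishes for $m$-a.e.\ $x \in W^{(n)}$. On the complementary open set $X \setminus \supp(\varphi_n)$ every point lies in exactly one $O_i^{(n)}$ (since $\bigcup_i \partial O_i^{(n)} \subset \{\varphi_n \equiv 1\} \subset \supp(\varphi_n)$), and on each open piece $O_i^{(n)} \setminus \supp(\varphi_n)$ one has $\varphi_n \equiv 0$, so $f - f_n \equiv f(x_i^{(n)})$ is constant. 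Strong locality again yields $\Gamma(f - f_n) = 0$ on each such piece, and summing over $i$ gives $\Gamma(f - f_n)(x) = 0$ for $m$-a.e.\ $x \notin \supp(\varphi_n)$. Since $\supp(\varphi_n) \subset W^{(n)}$, the sets $W^{(n)}$ and $X \setminus \supp(\varphi_n)$ together cover $X$, so at $m$-a.e.\ $x$ at least one of $\Gamma(f - f_n)(x)$, $\Gamma(g_n)(x)$ vanishes. The fiberwise bound $\|(\partial(f-f_n))_x \wedge (\partial g_n)_x\|^2 \leq \Gamma(f-f_n)(x)\,\Gamma(g_n)(x)$ then kills the integrand $m$-almost everywhere.

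The main obstacle I anticipate is interpretive rather than analytic: the displayed formula for $f_n$ must be read with an implicit factor $\mathbf{1}_{O_i^{(n)}}$ on each summand, so that $f_n = (1-\varphi_n)(f - f(x_i^{(n)}))$ on each piece $O_i^{(n)}$ and extends continuously by zero across $\bigcup_i \partial O_i^{(n)}$; this is what makes the support-separation work. Once this reading is fixed, the locality facts employed are standard consequences of strong locality for energy measures.
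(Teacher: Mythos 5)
Your proposal is correct and takes essentially the same route as the paper's own proof: the same decomposition into $\partial f\wedge\partial(g-g_n)$, which is small because $\mathcal{E}(g-g_n)<2^{-n}$ and $\Gamma(f)\in L^\infty(X,m)$, and $\partial(f-f_n)\wedge\partial g_n$, which vanishes identically since by strong locality $\Gamma(g_n)=0$ $m$-a.e.\ on $W^{(n)}$ and $\Gamma(f-f_n)=0$ $m$-a.e.\ off it, together with the same contractivity/sup-norm estimate for the first claim. Your interpretive remark on reading the definition of $f_n$ with an implicit restriction of each summand to $O_i^{(n)}$ is indeed the intended reading, so no gap remains.
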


\begin{proof}
The first statement follows from (\ref{E:energyboundgn}) and (\ref{E:supboundfn}). For the second, note that
\begin{align}
\left\|\partial f\wedge \partial g\right. &\left. -\partial f\wedge \partial g_n\right\|^2_{L^2(X,(\hat{\Lambda}^2\mathcal{H}_x)_{x\in X}, m)}\notag\\
&=\int_X\left(\left\|(\partial f)_x\right\|_{\mathcal{H}_x}^2\left\|(\partial(g-g_n))_x\right\|_{\mathcal{H}_x}^2-\left\langle (\partial f)_x,(\partial(g-g_n))_x\right\rangle_{\mathcal{H}_x}^2\right)m(dx)
\notag\\
&\leq 2\left\|\Gamma(f)\right\|_{L^\infty(X,m)}^2\: \mathcal{E}(g-g_n),\notag
\end{align}
what converges to zero by (\ref{E:energyboundgn}). On the other hand we observe
\begin{align}
\left\|\partial f\wedge\partial g_n\right. &\left. -\omega_n\right\|_{L^2(X,(\hat{\Lambda}^2\mathcal{H}_x)_{x\in X}, m)}^2\notag\\
&=\int_X\left(\left\|(\partial (f-f_n))_x\right\|_{\mathcal{H}_x}^2\left\|(\partial g_n)_x\right\|_{\mathcal{H}_x}^2-\left\langle (\partial(f-f_n))_x,(\partial g_n)_x\right\rangle_{\mathcal{H}_x}^2\right)m(dx)\notag\\
&\leq 2\int_{W^{(n)}}\Gamma(f-f_n)\Gamma(g_n)\:dm+2\int_{X\setminus  W^{(n)}}\Gamma(f-f_n)\Gamma(g_n)\:dm\notag\\
&=0,\notag
\end{align}
because $g_n$ is locally constant q.e. on $W^{(n)}$, what implies $\Gamma(g_n)(x)=0$ for $m$-a.e. $x\in W^{(n)}$, and $f-f_n$ is locally constant on $X\setminus W^{(n)}$, what implies $\Gamma(f-f_n)(x)=0$ for $m$-a.e. $x\in X\setminus W^{(n)}$. 
\end{proof}

\end{document}